\newcommand{\hooklong}{\lhook\joinrel\joinrel\joinrel{-}}
\newtheorem{thm}{Theorem}[section]
\newtheorem{cor}[thm]{Corollary}
\newtheorem{lem}[thm]{Lemma}
\theoremstyle{definition}
\newtheorem{rem}[thm]{Remark}
\newtheorem{exa}[thm]{Example}
\def\cgcoft{\cal{A}_{CG}^{co\ ft}}
\def\cgcoa{\cal{A}_{CG}^{co}}
\def\bb#1{{\mathbb{#1}}}
\def\cal#1{{\mathcal{#1}}}
\def\Hom{{\rm{Hom}}}
\def\lim{{\rm{lim}}}
\def\deg{{\rm{deg}}}
\def\mfrac#1{{\mathfrak{#1}}}
\title[Inverse and Antipode]{Cogroups in the Category of Connected Graded Algebras whose inverse and antipode coincide}
\author{Hiroshi Kihara}
\address{%
Center for Mathematical Sciences, University of Aizu\\
Tsuruga, Ikki-machi, Aizu-wakamatsu City, Fukushima, 965-8580, Japan.}
\email{kihara@u-aizu.ac.jp}
\thanks{This work was completed with the support of our \TeX-pert.}
\subjclass{Primary 16W50; Secondary 55P45}
\keywords{Cogroup, Inverse, Antipode, Co-$H$-group.}
\begin{document}
\begin{abstract}
Let $A$ be a cogroup in the category of connected graded algebras over a commutative ring $R$. Let $\nu$ denote the inverse of $A$ and $\chi$ the antipode of the underlying Hopf algebra of $A$. We clarify the differences and similarities of $ \nu $ and $ \chi $, and show that $\nu$ coincides with $\chi$ if and only if $A$ is commutative as a graded algebra. Let $\cgcoa$ be the category of cogroups satisfying these equivalent conditions. If $R$ is a field, the category $\cgcoa$ is completely determined. We also establish an equivalence of the full subcategory of $\cgcoa$ consisting of objects of finite type with a full subcategory of the category of positively graded $R$-modules without any assumption on $R$. The results in the case of $R=\bb{Q}$ are applied to the theory of co-$H$-groups.
\end{abstract}
\maketitle
\section{Introduction and main results.}
Cogroups in the category of connected graded algebras are important both in algebra and topology, and they have been studied by many authors (e.g. [6], [9], [3], [5]). In this paper, particular concern is focused on two important, but confusing self-maps of such a cogroup $A$: the inverse $\nu$ of $A$ and the antipode $\chi$ of the underlying Hopf algebra of $A$. We investigate the differences and similarities between the inverse $\nu$ and the antipode $\chi$, and show that $\nu$ and $\chi$ coincide if and only if $A$ is commutative as a graded algebra. We denote the category of cogroups satisfying these equivalent conditions by $\cgcoa$. Then we establish an equivalence of the category $\cgcoa$ with a full subcategory of the category $\cal{M}$ of positively graded modules in the case where the ground ring $R$ is a field. In this case, the category $\cgcoa$ can be completely determined since the full subcategory of $\cal{M}$ is simple. If we restrict ourselves to objects of finite type, we obtain a similar equivalence of categories without any assumption on $R$. However, the full subcategory of $\cal{M}$, and hence $\cgcoa$ is rather complicated in the case where $R$ is not a field, which is illustrated by examples. We also show that a cogroup $A$ whose underlying algebra is graded commutative is a cocommutative cogroup at least if $R$ is a field or $A$ is of finite type. The results for $R=\bb{Q}$ are applied to the theory of co-$H$-groups.
\par\indent Let us begin by recalling fundamental results on cogroups in the category of connected graded algebras.
\par\indent
Let $\cal{A}$ be the category of connected graded algebras over a commutative ring $R$, and $\cal{C}$ the category of connected graded coalgebras over $R$ (cf. [8]). Let $\cal{A}_{CM}$ and $\cal{A}_{CG}$ denote the subcategory of comonoids in $\cal{A}$ and that of cogroups in $\cal{A}$ respectively; comonoids and cogroups in $\cal{A}$ are with respect to the categorical coproduct in $\cal{A}$.
\par\indent
For $C\in \cal{C}$ and $B\in \cal{A}, G(C,B)$ denotes the set of homomorphisms of graded $R$-modules $f:C\longrightarrow B$ such that $f_0$ is the identity homomorphism of $R$. $G(C,B)$ is a group under the convolution product $\ast$ (see Lemma \ref{gcbgroup}).
\par\indent
The following theorem is essentially due to Berstein [6, Theorem 1.2].
\vspace{2mm}
\par\noindent {\bf{Theorem A.}} (1) There exist functors $S:\cal{A}_{CM}\longrightarrow \cal{C}$ and $T_{CM}:\cal{C}\longrightarrow \cal{A}_{CM}$ such that $S$ and $T_{CM}$ are mutually inverses up to natural isomorphism. For $C\in \cal{C}$, the underlying graded algebra of $T_{CM}(C)$ is a tensor algebra on $\overline{C}=\underset{n>0}\bigoplus\ C_n$.
\par\noindent (2) For $A\in \cal{A}_{CM}$, the monoid-valued functor $\cal{A}(A,\ \ )$ is naturally isomorphic to the functor $G(SA,\ \ )$.
\par\noindent (3) $\cal{A}_{CG}=\cal{A}_{CM}$ holds.

\begin{rem}
Berstein's proof that any comonoid in $\cal{A}$ is a cogroup is incorrect ([6, p.262]); the inverse of a cogroup in $\cal{A}$ must be a morphism of $\cal{A}$. Thus we assert part 2, which easily implies part 3 (the proof of part 2 is given in Section 2). See [5, Theorem 34.24] for another proof of part 3.
\end{rem}
For $A\in \cal{A}_{CG}$, let $\nu$ and $\chi$ denote the inverse of $A$ and the antipode of the underlying Hopf algebra of $A$ respectively ([6, p. 261], Remark \ref{remof2.2comonoidA}). These two maps often cause confusion; the fault of Berstein's proof also comes from this confusion. Thus it is important to clarify the differences and similarities between $\nu$ and $\chi$.
\par\indent
A major difference is that $\nu$ is a homomorphism of graded algebras, whereas $\chi$ is an antihomomorphism of graded algebras ([8, 8.7. Proposition]). On the other hand, $\nu$ is very similar to $\chi$ in view of the following theorem.
\begin{thm}\label{underHopf.alge}
Let $A$ be a cogroup in $\cal{A}$. Then the inverse $\nu$ of $A$ and the antipode $\chi$ of the underlying Hopf algebra of $A$ coincide on $SA$.
\end{thm}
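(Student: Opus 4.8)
The plan is to identify both $\nu$ and $\chi$, after restriction to $SA$, with the convolution inverse of one and the same element of the group $G(SA,A)$, and then to invoke uniqueness of inverses in a group. Throughout I use the canonical isomorphism $A\cong T_{CM}(SA)$ furnished by Theorem A to regard $SA=R\oplus\overline{SA}$ as a graded submodule of $A$, whose underlying algebra is then the tensor algebra $T(\overline{SA})$; I write $j\colon SA\hookrightarrow A$ for the inclusion, note that $j\in G(SA,A)$ because $j_0=\mathrm{id}_R$, and recall that the group structure on $G(SA,A)$ is the convolution product $\ast$ built from the comultiplication $\Delta_{SA}$ of the coalgebra $SA$ and the multiplication $\mu_A$ of $A$ (Lemma \ref{gcbgroup}).

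I would first treat the inverse. By parts (2) and (3) of Theorem A, $\cal{A}(A,-)$ is naturally isomorphic, as a \emph{group}-valued functor, to $G(SA,-)$, and one readily checks that on $\cal{A}(A,B)$ this isomorphism is restriction along $j$. Hence $\mathrm{id}_A\in\cal{A}(A,A)$ corresponds to $j\in G(SA,A)$ and its inverse $\nu$ corresponds to $j^{-1}$, so that
\[\nu|_{SA}=\nu\circ j=j^{-1}\qquad\text{in }G(SA,A).\]

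I would then treat the antipode. Let $q\colon A\sqcup A\to A\otimes A$ be the canonical algebra homomorphism and $\Delta=q\circ\psi\colon A\to A\otimes A$ the comultiplication of the underlying Hopf algebra of $A$. The crucial point is that $j$ is a morphism of coalgebras from $(SA,\Delta_{SA})$ to $(A,\Delta)$, i.e.\ $\Delta\circ j=(j\otimes j)\circ\Delta_{SA}$. This falls out of the explicit description of the comonoid $T_{CM}(C)$ used in the proof of Theorem A: the comultiplication of $T_{CM}(C)$ carries $c\in\overline{C}$ to $\iota_1(c)+\iota_2(c)$ plus the image of $\bar\Delta_C(c)$ under the map $\overline{C}\otimes\overline{C}\to A\sqcup A$ sending $x\otimes y$ to $\iota_1(x)\,\iota_2(y)$, and applying $q$ collapses this to $c\otimes 1+1\otimes c+\bar\Delta_C(c)=\Delta_C(c)$. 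Granting this, I precompose the antipode identity $\mu_A\circ(\chi\otimes\mathrm{id}_A)\circ\Delta=\eta_A\circ\varepsilon_A$ with $j$; using $(j\otimes j)\circ\Delta_{SA}=\Delta\circ j$ and $(\chi\circ j)\otimes j=(\chi\otimes\mathrm{id}_A)\circ(j\otimes j)$ one obtains
\[(\chi\circ j)\ast j=\mu_A\circ(\chi\otimes\mathrm{id}_A)\circ\Delta\circ j=\eta_A\circ\varepsilon_A\circ j=\eta_A\circ\varepsilon_{SA},\]
which is the identity element of $G(SA,A)$ (and $\chi\circ j$ does lie in $G(SA,A)$, since $\chi$ is the identity in degree $0$). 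As $G(SA,A)$ is a group, $(\chi\circ j)\ast j=e$ forces $\chi\circ j=j^{-1}$, and combining with the previous display, $\chi|_{SA}=\chi\circ j=j^{-1}=\nu\circ j=\nu|_{SA}$, as claimed.

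The formal manipulations above are routine; the step I expect to be the real obstacle is the coalgebra identity $\Delta\circ j=(j\otimes j)\circ\Delta_{SA}$, that is, pinning down exactly how the coalgebra $SA$ produced by Berstein's functor $S$ sits inside the underlying coalgebra of the Hopf algebra of $A$. This identity is also the structural reason why the agreement of $\nu$ and $\chi$ cannot in general be propagated beyond $SA$: on the generators $\overline{SA}$ both maps are forced to agree, but $\nu$ extends over all of $A$ multiplicatively while $\chi$ extends anti-multiplicatively, so they continue to agree on products precisely when $A$ is graded commutative.
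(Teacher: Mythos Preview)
Your proposal is correct and follows essentially the same route as the paper: both arguments identify $\nu|_{SA}$ and $\chi|_{SA}$ with the convolution inverse of the inclusion $j\colon SA\hookrightarrow A$ in $G(SA,A)$, the key input being that $j$ is a coalgebra map. The paper simply cites Berstein's result that $SA$ is a subcoalgebra of the underlying coalgebra of $A$ (so that precomposition with $j$ gives a group homomorphism $G(A,A)\to G(SA,A)$ carrying $1_A^{-1}=\chi$ to $j^{-1}$), whereas you unwind the same fact explicitly from the construction of $T_{CM}$.
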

Using Theorem \ref{underHopf.alge}, we find necessary and sufficient conditions that $\nu$ coincides with $\chi$.
\begin{thm}\label{nuequaltochi}
Let $A$ be a cogroup in $\cal{A}$. Let $\nu$ be the inverse of $A$ and $\chi$ the antipode of the underlying Hopf algebra of $A$. Then the following are equivalent:
\begin{itemize}
\item[(i)] $\nu=\chi$.
\item[(ii)] $\chi$ is a homomorphism of graded algebras.
\item[(iii)] $A$ is commutative as a graded algebra.\\
\end{itemize}
\if\else
\begin{itemize}
\item[(2)]
\begin{itemize}
\item[(a)] If the ground ring $R$ is a field, then the conditions {\rm{(i)-(iii)}} are equivalent to the follwing:
\begin{itemize}
\item[(iv)] $SA$ is isomorphic to $R$ or $C(R,n)$ for some $n$, where $n$ is even if $chR\neq 2$.
\end{itemize}
\item[(b)] If $A$ is of finite type as a graded module, then the conditions {\rm{(i)-(iii)}} are equivalent to the follwing:
\begin{itemize}
\item[(v)] For any maximal ideal $\mfrac{m}$ of $R$, the localization $SA_{\mfrac{m}}$ at $\mfrac{m}$ is isomorphic to $R_{\mfrac{m}}$ or $C(R_{\mfrac{m}}/{\mfrac{a}}_{\mfrac{m}},n)$ for some ideal ${\mfrac{a}}\subset {\mfrac{m}}$ and some $n$, where $n$ is even if $chR_{\mfrac{m}}/{\mfrac{a}}_{\mfrac{m}}\neq 2$.
\end{itemize}
\item[(c)] Suppose that $chR=2$. If $R$ is a field or $A$ is of finite type as a graded module, then the conditions {\rm{(i)-(iii)}} are equivalent to the following:
\begin{itemize}
\item[(vi)] $\nu=\chi=1_A$.
\end{itemize}
\end{itemize}
\end{itemize}
\fi
\end{thm}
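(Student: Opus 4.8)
The plan is to establish the cycle of implications $(\text{i})\Rightarrow(\text{ii})\Rightarrow(\text{iii})\Rightarrow(\text{i})$, drawing only on Theorem~A and Theorem~\ref{underHopf.alge} as non-elementary inputs. Recall the standing facts: $\nu$ is a homomorphism of graded $R$-algebras, $\chi$ is an \emph{anti}homomorphism of graded $R$-algebras ([8, 8.7.~Proposition]), both are homogeneous of degree $0$ (so $|\chi(a)|=|a|$ for homogeneous $a$) and restrict to the identity on $A_0=R$, and $\chi$, being the antipode of a connected graded Hopf algebra, is bijective. By Theorem~A\,(1) the underlying graded algebra of $A=T_{CM}(SA)$ is the tensor algebra on $\overline{SA}$, so $SA$ generates $A$ as an $R$-algebra; and by Theorem~\ref{underHopf.alge}, $\nu$ and $\chi$ agree on $SA$. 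The implication $(\text{i})\Rightarrow(\text{ii})$ is then immediate, since $\nu$ is always a homomorphism of graded algebras, so $\nu=\chi$ forces $\chi$ to be one.

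For $(\text{ii})\Rightarrow(\text{iii})$: assume $\chi$ is a homomorphism of graded algebras. For homogeneous $a,b\in A$, computing $\chi(ab)$ once via the homomorphism property and once via the antihomomorphism property gives $\chi(a)\chi(b)=\chi(ab)=(-1)^{|a|\,|b|}\chi(b)\chi(a)$. Since $\chi$ is bijective and degree-preserving, every homogeneous pair $x,y\in A$ can be written $x=\chi(a),\,y=\chi(b)$ with $|a|=|x|,\,|b|=|y|$, whence $xy=(-1)^{|x|\,|y|}yx$; that is, $A$ is commutative as a graded algebra.

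For $(\text{iii})\Rightarrow(\text{i})$: assume $A$ is graded commutative. Then for homogeneous $a,b$ the antihomomorphism property of $\chi$ together with $|\chi(a)|=|a|$, $|\chi(b)|=|b|$ yields $\chi(ab)=(-1)^{|a|\,|b|}\chi(b)\chi(a)=(-1)^{|a|\,|b|}(-1)^{|a|\,|b|}\chi(a)\chi(b)=\chi(a)\chi(b)$, so $\chi$ is a homomorphism of graded algebras. Now $\nu$ and $\chi$ are both homomorphisms of graded $R$-algebras $A\to A$, they agree on $A_0=R$, and by Theorem~\ref{underHopf.alge} they agree on $SA$; since $SA$ generates $A$ as an $R$-algebra, two such homomorphisms coinciding on a generating set are equal, so $\nu=\chi$. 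This closes the cycle.

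The argument is short once Theorems~A and~\ref{underHopf.alge} are available; the points that need care are the Koszul sign bookkeeping in the middle two implications, the appeal to the bijectivity of $\chi$ (equivalently, to the fact that the cogroup inverse $\nu$ is an involution, hence an automorphism of $A$), and making precise — from the tensor-algebra description in Theorem~A\,(1) — the exact sense in which $SA$ generates $A$ and is regarded as a submodule of $A$. I expect this last point, rather than the sign computations, to be the one most worth spelling out carefully, though none of the three is deep.
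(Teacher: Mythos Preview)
Your argument follows the same route as the paper: the cycle $(\mathrm{i})\Rightarrow(\mathrm{ii})\Rightarrow(\mathrm{iii})\Rightarrow(\mathrm{i})$, with the last implication coming from Theorem~\ref{underHopf.alge} plus the tensor-algebra description of $A$, and the middle one from the fact that $\chi$ is simultaneously a homomorphism and an antihomomorphism together with surjectivity of $\chi$.

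The one point that needs attention is your appeal to the bijectivity of $\chi$. You state it as a standing fact (``$\chi$, being the antipode of a connected graded Hopf algebra, is bijective''), but over a general commutative ring $R$ this is not something you can simply quote from Milnor--Moore; the paper in fact isolates exactly what is needed, namely \emph{surjectivity} of $\chi$, and proves it as a separate lemma by induction on degree using the recursive formula $\chi(x)=-x-\sum y_i\chi(z_i)$ from the proof of Lemma~\ref{gcbgroup} (showing that $\chi$ induces multiplication by $-1$ on indecomposables). Your parenthetical alternative justification, that $\nu$ is an involution and hence an automorphism, is circular here: knowing $\nu$ is bijective tells you nothing about $\chi$ until you have already established $\nu=\chi$, which is the conclusion of the theorem. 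So either supply the short inductive surjectivity argument yourself, or cite a reference valid over arbitrary $R$; without that, $(\mathrm{ii})\Rightarrow(\mathrm{iii})$ has a genuine gap. (Note that only surjectivity is needed, not bijectivity.)
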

\indent From Theorem 1.3, cogroups in $\cal{A}$ which we are concerned with are just cogroups whose underlying algebras are graded commutative. Thus we study the full subcategory $\cgcoa$ of $\cal{A}_{CG}$ consisting of such cogroups, and the full subcategory $\cgcoft$ of $\cgcoa$ consisting of objects whose underlying graded modules are of finite type.
\par\indent Let us construct the functor $C$ from the category $\cal{M}$ of positively graded $R$-modules to the category $\cal{C}$ which is used together with the equivalence $T_{CM}:\cal{C}\longrightarrow \cal{A}_{CG}$ to investigate the category $\cgcoa$. For an $R$-module $M$, let $M[n]$ denote the graded $R$-module with $M[n]_n=M$ and $M[n]_i=0$ ($i\neq n$). For a positively graded $R$-module $N$, let $C(N)$ denote the graded coalgebra which is defined to be the graded module $R[0]\oplus N$ equipped with the trivial coproduct (see Lemma 2.4); in the case where $N=M[n]$, $C(N)$ is often denoted by $C(M,n)$. This construction defines a fully faithful functor from $\cal{M}$ to $\cal{C}$.
\par\indent A positively graded module $N$ is called locally at most singly generated if for any maximal ideal $\mfrac{m}$ of $R$, the localization $N_{\mfrac{m}}$ at $\mfrac{m}$ is isomorphic to 0 or $R_{\mfrac{m}}/\mfrac{a}_{\mfrac{m}}[n]$ for some ideal $\mfrac{a}\subset \mfrac{m}$ and some $n$, where $n$ is even if $chR_{\mfrac{m}}/\mfrac{a}_{\mfrac{m}}\neq 2$. Let $\cal{S}$ denote the full subcategory of $\cal{M}$ consisting of locally at most singly generated objects. $\cal{S}^{ft}$ denotes the full subcategory of $\cal{S}$ consisting of objects of finite type.

\par\indent Let $\cal{F}$ denote the full subcategory of $\cal{S}$ consisting of objects isomorphic to one of $0,R[1],R[2],\cdots$ (resp. $0,R[2],R[4]\cdots$) if $chR=2$ (resp. $chR\neq 2$). Note that $\cal{F}$ coincides with $\cal{S}$ in the case where $R$ is a field.
\par\indent We can investigate the structure of categories $\cgcoa$ and $\cgcoft$ via the composite functor $\cal{M}\xrightarrow[]{\ C\ } \cal{C} \xrightarrow[]{\ T_{CM}\ } \cal{A}_{CG}$.
\begin{thm}
\begin{itemize}
\item[(1)] $T_{CM}\circ C$ restricts to the fully faithful functor $T_{CM}\circ C:\cal{S}\longrightarrow \cgcoa$. 
\item[(2)] If the ground ring $R$ is a field, then $T_{CM}\circ C$ restricts to an equicalence of categories $T_{CM}\circ C:\cal{F}\longrightarrow \cgcoa$.
\item[(3)] $T_{CM}\circ C$ restricts to an equicalence of categories $T_{CM}\circ C: \cal{S}^{ft}\longrightarrow \cgcoft$.
\end{itemize}
\end{thm}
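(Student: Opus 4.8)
The plan is to exploit that $T_{CM}\circ C\colon\cal{M}\to\cal{A}_{CG}$ is already fully faithful, being the composite of the fully faithful functor $C$ with the equivalence $T_{CM}$ of Theorem~A(1). Hence, once I know this composite carries a given full subcategory of $\cal{M}$ into a given full subcategory of $\cal{A}_{CG}$, fully faithfulness of the restriction is automatic; so the real content of (1) is that $T_{CM}(C(N))\in\cgcoa$ for every $N\in\cal{S}$, and the real content of (2) and (3) is essential surjectivity of $T_{CM}\circ C$ onto $\cgcoa$, resp.\ onto $\cgcoft$. Throughout I would use, via Theorem~A(1), that the underlying graded algebra of $T_{CM}(C(N))$ is the tensor algebra $T(N)$ on $N=\overline{C(N)}$, and that any $A\in\cal{A}_{CG}$ satisfies $A\cong T_{CM}(SA)$.

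The technical core will be the criterion: \emph{if $N\in\cal{S}$ then $T(N)$ is graded commutative; conversely, if $T(N)$ is graded commutative and $N$ is of finite type---in particular if $R$ is a field---then $N\in\cal{S}$, and even $N\in\cal{F}$ when $R$ is a field}. I would first record that graded commutativity, like the finite-type property, is a local condition, since localization is flat and $T(N)_{\mfrac{m}}\cong T_{R_{\mfrac{m}}}(N_{\mfrac{m}})$. For the first implication, when $N_{\mfrac{m}}\cong(R_{\mfrac{m}}/\mfrac{a}_{\mfrac{m}})[n]$ I would use $(R_{\mfrac{m}}/\mfrac{a}_{\mfrac{m}})\otimes_{R_{\mfrac{m}}}(R_{\mfrac{m}}/\mfrac{a}_{\mfrac{m}})\cong R_{\mfrac{m}}/\mfrac{a}_{\mfrac{m}}$ to identify $T_{R_{\mfrac{m}}}(N_{\mfrac{m}})$ with the graded algebra $(R_{\mfrac{m}}/\mfrac{a}_{\mfrac{m}})[x]$, $|x|=n$, and this is graded commutative exactly when $n$ is even or $ch\,R_{\mfrac{m}}/\mfrac{a}_{\mfrac{m}}=2$, which is precisely the defining condition of $\cal{S}$; this proves (1). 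For the converse, base-changing $T_{R_{\mfrac{m}}}(N_{\mfrac{m}})$ to the residue field $k(\mfrac{m})$ yields a graded commutative tensor algebra over a field, which forces $N_{\mfrac{m}}\otimes k(\mfrac{m})$ to be at most one dimensional and concentrated in a single degree $n$, with $n$ even if $ch\,k(\mfrac{m})\neq2$; when $N$ is of finite type, Nakayama's lemma promotes this to $N_{\mfrac{m}}\cong0$ or $(R_{\mfrac{m}}/\mfrac{a}_{\mfrac{m}})[n]$, and inspecting the degree~$2n$ component of $T_{R_{\mfrac{m}}}(N_{\mfrac{m}})\cong(R_{\mfrac{m}}/\mfrac{a}_{\mfrac{m}})[x]$ then shows $2\in\mfrac{a}_{\mfrac{m}}$ when $n$ is odd, so that $N\in\cal{S}^{ft}$; over a field no localization is needed and one obtains directly $N\cong0$ or $R[n]$ with $n$ in the range defining $\cal{F}$. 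Together with the easy fact that $T(N)$ is of finite type whenever $N$ is (each $T(N)_k$ being a finite direct sum of tensor products of the $N_i$), this yields $T_{CM}\circ C(\cal{S}^{ft})\subseteq\cgcoft$, and, since $\cal{F}=\cal{S}$ over a field, $T_{CM}\circ C(\cal{F})\subseteq\cgcoa$.

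For essential surjectivity, given $A$ in $\cgcoa$ (resp.\ in $\cgcoft$) I would write $A\cong T_{CM}(SA)$. Since the underlying algebra of $A$ is $T(\overline{SA})$ and is graded commutative, the criterion gives $\overline{SA}\in\cal{S}$, and---over a field, resp.\ using that $\overline{SA}_k$ is a direct summand of $A_k$ and hence finitely generated---even $\overline{SA}\in\cal{F}$, resp.\ $\overline{SA}\in\cal{S}^{ft}$. It then remains to see that the coproduct of $SA$ is trivial: for $x\in\overline{SA}_k$ write $\Delta x=x\otimes1+1\otimes x+z$ with $z\in\bigoplus_{0<i<k}\overline{SA}_i\otimes_R\overline{SA}_{k-i}$; at each maximal ideal $\mfrac{m}$, either $x$ localizes to $0$ (and then so does $z$), or $\overline{SA}_{\mfrac{m}}$ is concentrated in degree $k$, which annihilates every $(\overline{SA}_i)_{\mfrac{m}}$ with $0<i<k$ and hence $z_{\mfrac{m}}$; in either case $z_{\mfrac{m}}=0$, so $z=0$. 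Therefore $SA\cong C(\overline{SA})$ in $\cal{C}$ and $A\cong T_{CM}(C(\overline{SA}))$, completing (2) and (3).

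The hardest part, I expect, is the ``only if'' half of the criterion: the residue-field computation gives no control of $N_{\mfrac{m}}$ as an $R_{\mfrac{m}}$-module, and finite generation is genuinely needed in order to invoke Nakayama---which is exactly why (1) cannot be improved to an equivalence for a general ground ring, as the examples alluded to in the introduction show. A second delicate point is that the parity restriction on $n$ must be obtained for $ch\,R_{\mfrac{m}}/\mfrac{a}_{\mfrac{m}}$ rather than merely for $ch\,k(\mfrac{m})$, which is why one examines the degree~$2n$ part of the localized tensor algebra; everything else should be routine once locality of graded commutativity has been recorded.
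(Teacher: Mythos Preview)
Your proposal is correct and follows essentially the same route as the paper: full faithfulness is inherited from $C$ and $T_{CM}$; the criterion ``$T(N)$ graded commutative $\Leftrightarrow$ $N\in\cal{S}$'' is obtained by localizing (the paper's Lemma~2.5) and, for the converse, by reducing to the residue field and invoking Nakayama (the paper's Lemma~2.6); and essential surjectivity comes from $A\cong T_{CM}(SA)$ together with the observation that $\overline{SA}$ being locally concentrated in a single degree forces the coproduct of $SA$ to be trivial. The only presentational difference is that the paper packages the uniqueness of the coalgebra structure as Lemma~2.4(1) (when $N=M[n]$) and then appeals to a local-to-global step ``similar to Lemma~2.5(2)'', whereas you spell this step out directly by showing the cross term $z$ vanishes at every maximal ideal---your version is in fact slightly more explicit than the paper's on this point.
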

Let us explicitly describe the most basic objects of $\cgcoa: T_{CM}C(0)=T_{CM}R$ and $T_{CM}C(R[n])=T_{CM}C(R,n)$
\par\indent  $A(0)$ denote the graded algebra $R$ equipped with the obvious cogroup structure. For $n>0$, let $A(n)$ denote the cogroup in $\cal{A}$ satisfying the following conditions:
\begin{itemize}
\item[(1)] The underlying graded algebra of $A(n)$ is the polynomial ring $R[X]$ on a generator $X$ of degree $n$.
\item[(2)] The comultiplication $\Phi:R[X]\longrightarrow R[X]\ast R[X]$ and the inverse $\nu:R[X]\longrightarrow R[X]$ are the homomorphisms of graded algebras determined by $\Phi(X)=X'+X''$ and $\nu(X)=-X$ respectively, where $X'$ and $X''$ are the images of $X$ under the canonical inclusions to the first and second functors of $R[X]\ast R[X]$ respectively.
\end{itemize}
The cogroups $A(0)$ and $A(n)$ give explicit descriptions of $T_{CM}R$ and $T_{CM}C(R,n)$ respectively (cf. Lemma 2.4).
\par\indent From Theorem 1.4(2), we can easily deduce
\begin{cor}
Suppose that the ground ring $R$ is a field.
\begin{itemize}
\item[(1)] The category $\cgcoa$ is equivalent to the full subcategory consisting of $A(0),A(1),A(2),\cdots$ (resp. $A(0),A(2),A(4),\cdots$) if $chR=2$ (resp. $chR\neq 2$).
\item[(2)] $\cgcoa(A(m),A(n))= \begin{cases} R & (m=n>0) \\ 0 & (otherwise).\end{cases} $
\end{itemize}
\end{cor}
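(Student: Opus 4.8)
The plan is to deduce everything from the equivalence $T_{CM}\circ C\colon\cal{F}\longrightarrow\cgcoa$ of Theorem 1.4(2), combined with the explicit identifications $T_{CM}C(0)=A(0)$ and $T_{CM}C(R[n])=A(n)$ recorded (via Lemma 2.4) immediately before the corollary.

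For part (1): being an equivalence, $T_{CM}\circ C$ is essentially surjective, so every object of $\cgcoa$ is isomorphic to $T_{CM}C(N)$ for some $N\in\cal{F}$; but every $N\in\cal{F}$ is isomorphic to one of $0,R[1],R[2],\dots$ (resp. $0,R[2],R[4],\dots$) when $chR=2$ (resp. $chR\neq 2$), hence every object of $\cgcoa$ is isomorphic to one of $A(0),A(1),A(2),\dots$ (resp. $A(0),A(2),A(4),\dots$). Therefore the inclusion into $\cgcoa$ of the full subcategory spanned by these objects is essentially surjective, and it is fully faithful because it is the inclusion of a full subcategory; so it is an equivalence. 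One may add that these cogroups are pairwise non-isomorphic, since their underlying graded modules are, and $T_{CM}\circ C$, being fully faithful, reflects isomorphisms; so this full subcategory is in fact a skeleton of $\cgcoa$.

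For part (2): full faithfulness of $T_{CM}\circ C$ gives, for $m,n>0$,
\[
\cgcoa\bigl(A(m),A(n)\bigr)=\cgcoa\bigl(T_{CM}C(R[m]),T_{CM}C(R[n])\bigr)\cong\cal{F}\bigl(R[m],R[n]\bigr)=\cal{M}\bigl(R[m],R[n]\bigr),
\]
and likewise with $A(0)$ in either slot on replacing the corresponding $R[n]$ by $0$; the last equality holds because $\cal{F}$ is a full subcategory of $\cal{M}$. Now $\cal{M}\bigl(R[m],R[n]\bigr)$ is computed directly: a degree-preserving $R$-linear map between graded modules each concentrated in a single degree is trivial unless the two degrees agree, in which case it is $\Hom_R(R,R)\cong R$; and any hom-set involving the zero object is trivial. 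This is exactly the asserted formula.

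I do not expect a genuine obstacle here: once Theorem 1.4(2) is in hand, this corollary is a routine unwinding of the categorical equivalence. The only points needing mild care are the bookkeeping of the identifications $T_{CM}C(0)=A(0)$ and $T_{CM}C(R[n])=A(n)$, and the separate treatment in the final hom-computation of the slots occupied by the zero object.
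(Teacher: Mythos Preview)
Your proof is correct and follows the same approach as the paper, which simply records that the corollary is immediate from Theorem 1.4(2); you have merely written out the details of that deduction.
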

\par\indent
In the case where $R$ is not a field, there are objects of $\cgcoa$ other than $A(n)$; Theorem 1.4(1) implies that $A=T_{CM}C(R/\mfrac{a}, 2k)$ is an object of $\cgcoa$. In the case where the ground ring $R$ is not local, we can construct several types of objects $A$ in $\cgcoa$ such that $A$ is not even singly generated as a graded algebra.
 \begin{exa}
 \begin{itemize}
  \item[(1)] Let $M$ be a projective module of rank 1, and $n$ a positive integer. Suppose that $n$ is even if $chR\neq 2$. Then $A=T_{CM}C(M,n)$ is an object in $\cgcoa$.
 \item[(2)] Suppose that a family $\{R/\mfrac{a}_{\alpha}[n_{\alpha}]\}_{\alpha\in\Lambda}$ in $\cal{M}$ satisfies the following conditions:
 \begin{itemize}
 \item[(a)] $\mfrac{a}_{\alpha}+\mfrac{a}_{\beta}=R$ if $\alpha\neq\beta$.
 \item[(b)] $n_{\alpha}$ is even if $chR/\mfrac{a}_{\alpha}\neq 2$.
 \end{itemize}
 (If $\{\mfrac{a}_{\alpha}\}_{\alpha\in\Lambda}$ is a family of pairwise distinct maximal ideals, the condition (a) is automatically satisfied.) Set $N=\underset{\alpha\in\Lambda}\bigoplus R/\mfrac{a}_{\alpha}[n_{\alpha}]$. Then $A=T_{CM}C(N)$ is an object in $\cgcoa$.

 \item[(3)] Let $S$ be a multiplicatively closed subset of $R$. Then if $T_{CM}C(M,n)(=T_{CM}C_R(M,n))$ is commutative as a graded algebra, so is $T_{CM}C_R(S^{-1} M,n)$. In particular, $A=T_{CM}C_{\bb{Z}}(T,2k)$ is a cogroup in $\cal{A}$ such that $A$ is in $\cgcoa$ and $A$ is not of finite type for any ring $T$ with $\bb{Z}\subsetneqq T\subset \bb{Q}$.
 \end{itemize}
 \end{exa}
 \if\else
 \begin{rem}
 Suppose that $chR=2$. Then the cogroups $A(n)$ and those in Example 1.4 give examples of graded Hopf algebras whose antipode is the identity, by Theorem 1.3(2)(c). (Though a cogroup in Example 1.5 may not be of finite type as a graded module, it can be seen that $\nu=\chi=1_A$.)
 \end{rem}
\fi
\par\indent
We have investigated the category $\cgcoa$ of cogroups in $\cal{A}$ whose underlying algebras are graded commutative. Berstein found the condition that a cogroup $A$ is cocommutative and the condition that a cogroup $A$ is cocommutative as a graded coalgebra ([6, p.262]). Let $\cal{A}_{coCG}$ be the full subcategory of $\cal{A}_{CG}$ consisting of cocommutative objects, and $^{co}\cal{A}_{CG}$ the full subcategory of $\cal{A}_{CG}$ consisting of objects whose underlying coalgebras are graded cocommutative. We would like to compare the full subcategories $\cal{A}_{CG}^{co}$, $\cal{A}_{coCG}$ and $^{co}\cal{A}_{CG}$.
\par\indent
Let $^{co}\cal{C}$ denote the full subcategory of $\cal{C}$ consisting of graded cocommutative objects. Note that the functor $C:\cal{M}\longrightarrow\cal{C}$ factors as a composite $\cal{M}\xrightarrow[]{\ C\ }\ ^{co}\cal{C}\hooklong\hspace{-2mm}\longrightarrow\cal{C}$ of two fully faithful functors.
\par\indent
The following corollary summarizes our results on $\cgcoa$ and the relationship of $\cgcoa$, $\cal{A}_{coCG}$ and $^{co}\cal{A}_{CG}$.
\begin{cor}
\begin{itemize}
\item[(1)] The functors $T_{CM}$ and $C$ define the vertical functors of the diagram
\begin{center}
\newcommand{\xycoordinateofnode}[2]{%
	\node at (#1) {#2};
}
\newcommand{\directarrowintikz}[2]{\draw [->] (#1) -- (#2);}
\newcommand{\symarrowintikz}[2]{\draw [<->] (#1) -- (#2);}
\newcommand{\lineattikz}[2]{\draw [-] (#1) -- (#2);}
\newcommand{\attachhook}[3]{%
	\node at (#1) [#2] {\reflectbox{\rotatebox[origin=c]{#3}{$\hooklong$}}};
}
\newcommand{\attacnume}[3]{%
	\node at (#1) [#2] {#3};
}
\begin{tikzpicture}
\xycoordinateofnode{0.6,-2}{$\hooklong$} \xycoordinateofnode{0.5,-0}{$\hooklong$}
\xycoordinateofnode{2.6,-2}{$\hooklong$} \xycoordinateofnode{2.6,-0}{$\hooklong$}
\xycoordinateofnode{4.7,-2}{$\hooklong$} \xycoordinateofnode{4.7,-0}{$\hooklong$}
\xycoordinateofnode{6.7,-2}{$\hooklong$} \xycoordinateofnode{6.7,-0}{$\hooklong$}
\directarrowintikz{0,-0.25}{0,-1.75} \directarrowintikz{2,-0.25}{2,-1.75} 
\directarrowintikz{4,-0.25}{4,-1.75} \directarrowintikz{6,-0.25}{6,-1.75}
\directarrowintikz{8,-0.25}{8,-1.75}
\attacnume{0,-1}{left}{{\small{$T_{CM}\circ C$}}} \attacnume{2,-1}{left}{{\small{$T_{CM}\circ C$}}} 
\attacnume{4,-1}{left}{{\small{$T_{CM}\circ C$}}} \attacnume{6,-1}{left}{{\small{$T_{CM}$}}} \attacnume{8,-1}{left}{{\small{$T_{CM}$}}} 
\directarrowintikz{0.45,-0}{1.6,-0} \directarrowintikz{2.5,-0}{3.4,-0}
\directarrowintikz{4.6,-0}{5.4,-0} \directarrowintikz{6.6,-0}{7.5,-0}
\directarrowintikz{0.45,-2}{1.4,-2} \directarrowintikz{2.5,-2}{3.4,-2}
\directarrowintikz{4.6,-2}{5.4,-2} \directarrowintikz{6.6,-2}{7.5,-2}
\xycoordinateofnode{5,0.25}{{\small{$C$}}}
\xycoordinateofnode{0,0}{$\cal{S}^{ft}$}
\xycoordinateofnode{2,0}{$\cal{S}$}
\xycoordinateofnode{4,0}{$\cal{M}$}
\xycoordinateofnode{6,0}{$^{co}\cal{C}$}
\xycoordinateofnode{8,0}{$\cal{C}$}
\xycoordinateofnode{0,-2}{$\cgcoft$}
\xycoordinateofnode{2,-2}{$\cal{A}_{coCG}^{co}$}
\xycoordinateofnode{4,-2}{$\cal{A}_{coCG}$}
\xycoordinateofnode{6,-2}{$^{co}\cal{A}_{CG}$}
\xycoordinateofnode{8,-2}{$\cal{A}_{CG},$}
\end{tikzpicture}
\end{center}
where $\cal{A}_{coCG}^{co}$ denotes the full subcategory of $\cal{A}_{CG}$ consisting of objects which are in both $\cgcoa$ and $\cal{A}_{coCG}$.
\item[(2)] The diagram above is commutative up to natural isomorphism, and all the functors are fully faithful.
\item[(3)] All the vertical functors except the one with source $\cal{S}$ are equivalences of categories.
\item[(4)] If the ground ring $R$ is a field, then $\cal{F}=\cal{S}^{ft}=\cal{S}$ and $\cgcoft=\cal{A}_{coCG}^{co}=\cal{A}^{co}_{CG}$ hold.
\end{itemize}
\end{cor}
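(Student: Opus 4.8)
The strategy is to reduce the whole statement to the equivalence $T_{CM}\colon\cal{C}\xrightarrow{\ \sim\ }\cal{A}_{CG}$ of Theorem~A (with quasi-inverse $S$ and the natural group isomorphism $\cal{A}(A,-)\cong G(SA,-)$), to Theorem~\ref{nuequaltochi} (so that $\cgcoa$ is exactly the class of cogroups whose underlying algebra is graded commutative), to Theorem~1.4, and to the two facts that $C\colon\cal{M}\to\cal{C}$ is fully faithful and factors through $^{co}\cal{C}$. The one substantial point is the pair of transfer statements: (a) a cogroup $A$ is cocommutative if and only if $SA$ has trivial coproduct, i.e.\ $SA\cong C(\overline{SA})$; and (b) the underlying coalgebra of $A$ is graded cocommutative if and only if $SA$ is graded cocommutative. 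Both come from an explicit description of the comultiplication of $A=T_{CM}C$.

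For that description, recall (Theorem~A(1)) that the underlying algebra of $A$ is the tensor algebra on $\bar C$, so the underlying algebra of $A\ast A$ is the tensor algebra on $\bar C\oplus\bar C$, i.e.\ $\bigoplus_{n\ge 0}(\bar C\oplus\bar C)^{\otimes n}$ as a module, the coprojections $j_1,j_2$ being the two summand inclusions $\bar C\hookrightarrow\bar C\oplus\bar C$. By the construction of $T_{CM}$ and the natural isomorphism of Theorem~A(2), the comultiplication $\Phi\colon A\to A\ast A$ restricted to $SA=C$ is the convolution product $j_1\ast j_2$ of the coprojections, so that for $c\in\bar C$,
\[
\Phi(c)=c'+c''+\textstyle\sum\,(\bar c_{(1)})'(\bar c_{(2)})'',
\]
where $c',c''$ are the images of $c$ under $j_1,j_2$ and $\sum\bar c_{(1)}\otimes\bar c_{(2)}$ is the reduced coproduct of $c$ in $C$. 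Refine the module decomposition of $A\ast A$ by recording which copy $\bar C^{(1)}$ or $\bar C^{(2)}$ each tensor factor lies in. The flip $\sigma$ of the coproduct $A\ast A$ is the algebra endomorphism with $\sigma j_1=j_2$, $\sigma j_2=j_1$, hence it interchanges $\bar C^{(1)}$ and $\bar C^{(2)}$; therefore $\sigma\Phi=\Phi$ on $\bar C$ would equate $\sum(\bar c_{(1)})'(\bar c_{(2)})''$, which lies in the $\bar C^{(1)}\otimes\bar C^{(2)}$ summand, with $\sum(\bar c_{(1)})''(\bar c_{(2)})'$, which lies in the disjoint $\bar C^{(2)}\otimes\bar C^{(1)}$ summand, forcing both to vanish, hence the reduced coproduct of $C$ to be $0$. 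Conversely, if $C$ has trivial coproduct then $\Phi(c)=c'+c''$ is manifestly $\sigma$-fixed, and since $\Phi$ and $\sigma$ are algebra maps agreeing on the algebra generators $\bar C$ they agree everywhere; this proves~(a). For~(b), apply the canonical algebra map $\pi\colon A\ast A\to A\otimes A$ (Koszul-signed tensor product) to the displayed formula to get $\Delta_A(c)=c\otimes 1+1\otimes c+\sum\bar c_{(1)}\otimes\bar c_{(2)}$ for $c\in\bar C$; then $\tau\Delta_A=\Delta_A$ (with $\tau$ the Koszul flip) holds iff the reduced coproduct of $C$ is graded cocommutative, again because $\Delta_A$ and $\tau$ are algebra maps, which agree everywhere iff they agree on $\bar C$.

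Everything else is bookkeeping. Part (1): the two outer columns are restrictions of $T_{CM}$, which sends $^{co}\cal{C}$ into $^{co}\cal{A}_{CG}$ by~(b); since every $C(N)$ has trivial coproduct, $T_{CM}C$ sends $\cal{M}$ into $\cal{A}_{coCG}$ by~(a), and combined with Theorem~1.4(1) and (3) it sends $\cal{S}$ into $\cgcoa\cap\cal{A}_{coCG}=\cal{A}_{coCG}^{co}$ and $\cal{S}^{ft}$ into $\cgcoft$. Part (2): the horizontal arrows are inclusions of full subcategories, $C$ and $T_{CM}$ are fully faithful (Theorem~A), $T_{CM}C$ is fully faithful (Theorem~1.4(1)), and each square commutes up to natural isomorphism because it is assembled from $T_{CM}$ and $T_{CM}\circ C$ together with $S\circ T_{CM}\cong\mathrm{id}$. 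Part (3): $\cal{C}\to\cal{A}_{CG}$ and $\cal{S}^{ft}\to\cgcoft$ are equivalences by Theorem~A and Theorem~1.4(3); $^{co}\cal{C}\to{}^{co}\cal{A}_{CG}$ and $\cal{M}\to\cal{A}_{coCG}$ are fully faithful and essentially surjective, since any object $A$ of the target has $SA\in{}^{co}\cal{C}$, resp.\ $SA\cong C(\overline{SA})$, by~(b), resp.\ (a), whence $A\cong T_{CM}(SA)$ is in the image; the functor from $\cal{S}$ is only fully faithful, and not essentially surjective by Example~1.6(3). Part (4): over a field $\cal{S}=\cal{F}$ (localize at the zero ideal) and all its objects are of finite type, so $\cal{F}=\cal{S}^{ft}=\cal{S}$; moreover $\cgcoa=T_{CM}C(\cal{F})\subseteq T_{CM}C(\cal{M})\subseteq\cal{A}_{coCG}$ by Theorem~1.4(2) and~(a), so $\cal{A}_{coCG}^{co}=\cgcoa$, and since every object of $\cgcoa$ is then of finite type, $\cgcoft=\cgcoa$ as well, giving $\cgcoft=\cal{A}_{coCG}^{co}=\cgcoa$.

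The real obstacle is the transfer step: extracting the formula for $\Phi$ on $\bar C$ from Theorem~A(2) and the construction of $T_{CM}$, and then exploiting the refined decomposition of the tensor algebra on $\bar C\oplus\bar C$ that makes ``first factor in copy~$1$'' disjoint from ``first factor in copy~$2$''. Once this is set up, both directions of (a) and (b) are one-line arguments, and the category-theoretic assembly in (1)--(4) is routine.
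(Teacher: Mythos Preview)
Your proof is correct and follows essentially the same architecture as the paper's: reduce everything to the two transfer statements (a) $A\in\cal{A}_{coCG}\Leftrightarrow SA\cong C(\overline{SA})$ and (b) $A\in{}^{co}\cal{A}_{CG}\Leftrightarrow SA\in{}^{co}\cal{C}$, then invoke Theorem~A and Theorem~1.4 for the bookkeeping. The only real difference is that the paper simply \emph{cites} (a) and (b) as Berstein's results ([6, Corollary~2.6] and [6, p.~262]), whereas you reprove them from scratch via the explicit formula $\Phi(c)=c'+c''+\sum(\bar c_{(1)})'(\bar c_{(2)})''$ (which is exactly formula~(2.6$'$) of~[6], also used in the paper's Lemma~2.4) together with the disjointness of the $\bar C^{(1)}\otimes\bar C^{(2)}$ and $\bar C^{(2)}\otimes\bar C^{(1)}$ summands in $T(\bar C\oplus\bar C)$; this makes your argument self-contained at the cost of a paragraph, but adds no new idea.

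One minor remark: your parenthetical in part~(3) that the functor from $\cal{S}$ is ``not essentially surjective by Example~1.6(3)'' goes slightly beyond what the corollary asserts (it only claims the \emph{other} four are equivalences); the paper defers this non-equivalence to Remark~1.8. Your observation is correct --- for $T$ with $\bb{Z}\subsetneqq T\subset\bb{Q}$ the module $T[2k]$ is not in $\cal{S}$ over $\bb{Z}$ since $T_{(p)}$ need not be a cyclic $\bb{Z}_{(p)}$-module --- but it is not needed for the proof of the corollary itself.
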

\begin{rem}
The fully faithful functors $\cal{S}^{ft}\hooklong\hspace{-2mm}\longrightarrow\cal{S}$ and $\cgcoft\hooklong\hspace{-2mm}\longrightarrow\cal{A}_{coCG}^{co}$ are not equivalences of categories in general (cf. Example 1.6).
\end{rem}
\indent
Next we summarize the relationship between cogroups in $\cal{A}$ and co-$H$-groups, and give a topological application of Theorems \ref{nuequaltochi} and 1.4.
\par\indent
Let $\cal{H}'$ be the category of $1$-connected co-$H$-groups having the homotopy type of a $CW$-complex, and homotopy classes of co-$H$-maps. Let $\cal{H}'_{\bb{Q}}$ be the full subcategory of $\cal{H'}$ consisting of $1$-connected rational co-$H$-groups.
\par\indent The following is deduced from results of Berstein [6] and Scheerer [9].
\vspace{2mm}\par\noindent {\bf{Theorem B.}} Suppose that the ground ring $R$ is a field.
\begin{itemize}
\item[(1)] The loop space homology $H_{\ast}(\Omega\ \cdot\ ; \ R)$ defines a functor from $\cal{H}'$ to $\cal{A}_{CG}$.
\item[(2)] Let $X$ be an object of $\cal{H}'$. Then the cogroup $H_{\ast}(\Omega X ; R)$ in $\cal{A}$ has $H_{\ast}(\Omega\nu ;  R)$ and $H_{\ast}(\chi ;  R)$ as an inverse and an antipode respectively, where $\nu$ is a homotopy inverse of the co-$H$-group $X$ and $\chi$ is a homotopy inverse of the $H$-group $\Omega X$.
\item[(3)] If $R$ is the rationals $\bb{Q}$, $H_{\ast}(\Omega\ \cdot\ ; R)$ restricts to a fully faithful functor from $\cal{H}_{\bb{Q}}'$ to $\cal{A}_{CG}$.
\end{itemize}

\begin{rem}\label{assertiononnu}
\par\noindent
(1) Let $\cal{H}'^{ft}_{\bb{Q}}$ be the full subcategory of $\cal{H}_{\bb{Q}}'$ consisting of objects of finite type, and let $^{co}\cal{A}_{CG}^{ft}$ be the full subcategory of $^{co}\cal{A}_{CG}$ consisting of objects of finite type. Then $H_{\ast}(\Omega\ \cdot\ ;\bb{Q})$ restricts to an equivalence of $\cal{H}'^{ft}_{\bb{Q}}$ and $^{co}\cal{A}_{CG}^{ft}$ (cf. [1, Theorem A$'$], Theorem A and [6, p. 262]).
\par\noindent
(2) In view of Theorem B, the fact that any comonoid in $\cal{A}$ is a cogroup (Theorem A(2)) corresponds to the fact that any $1$-connected homotopy-associative co-$H$-complex is a co-$H$-group. On the other hand, the fact that any connected bialgebra has an antipode (Lemma 2.1 and Remark 2.2) corresponds to the fact that any connected homotopy-associative $H$-complex is an $H$-group. (Refer to [2, pp. 1147-1148].)
\end{rem}
Theorem B is used to deduce the following result on co-$H$-groups from Theorems 1.3 and 1.4.
\begin{thm}\label{nusamequaltochi}
Let $X$ be a $1$-connected rational co-$H$-group having the homotopy type of a $CW$-complex. Let $\nu$ be a homotopy inverse of $X$ and $\chi$ a homotopy inverse of $\Omega X$. Then the following are equivalent:
\begin{itemize}
\item[(i)] $\Omega\nu\simeq\chi$.
\item[(ii)] $\chi$ is an $H$-map.
\item[(iii)] $\Omega X$ is a homotopy commutative $H$-group.
\item[(iv)] $X$ is co-$H$-equivalent to a singleton or the rationalization of $S^{2n+1}(=\Sigma S^{2n})$ for some $n>0$.
\item[(v)] $X$ is rationally contractible or rationally equivalent to $S^{2n+1}$ for some $n>0$.
\end{itemize}
\end{thm}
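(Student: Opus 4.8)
The plan is to transport the five conditions to algebra, apply Theorems \ref{nuequaltochi} and 1.4, and transport back using Theorem B. Put $A:=H_{\ast}(\Omega X;\bb{Q})$. Since $X$ is $1$-connected, $\Omega X$ is connected, so $A$ is a connected graded algebra over $\bb{Q}$, and by Theorem B it is a cogroup in $\cal{A}$ whose inverse is $\nu_A:=H_{\ast}(\Omega\nu;\bb{Q})$ and whose antipode is $\chi_A:=H_{\ast}(\chi;\bb{Q})$. Introduce the auxiliary algebraic condition $(\ast)$: \emph{$A$ is commutative as a graded $\bb{Q}$-algebra}. By Theorem \ref{nuequaltochi}, $(\ast)$ is equivalent to $\nu_A=\chi_A$ and to ``$\chi_A$ is a homomorphism of graded algebras''. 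Now each of (i), (ii), (iii) implies $(\ast)$ by elementary functoriality: (i) gives $\nu_A=\chi_A$ by applying $H_{\ast}(-;\bb{Q})$; (ii) gives that $\chi_A$ is a homomorphism because an $H$-map induces a homomorphism of Pontryagin rings; and (iii) gives $(\ast)$ because the Pontryagin ring of a homotopy commutative $H$-space is graded commutative.

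The crux of the argument is the implication $(\ast)\Rightarrow$(iv). If $(\ast)$ holds, then $A\in\cgcoa$, so by Corollary 1.5(1) (i.e. Theorem 1.4(2) applied to the field $\bb{Q}$, which has characteristic $\neq 2$) $A$ is isomorphic, as a cogroup in $\cal{A}$, to $A(0)=\bb{Q}$ or to $A(2n)$ for some $n>0$ — the cogroup whose underlying algebra is $\bb{Q}[X]$ with $X$ in degree $2n$ and whose comultiplication is primitive on $X$. One then identifies $\bb{Q}=H_{\ast}(\Omega\,\mathrm{pt};\bb{Q})$ and $A(2n)\cong H_{\ast}(\Omega S^{2n+1};\bb{Q})$ \emph{as cogroups in $\cal{A}$}, the latter because the (essentially unique, suspension) co-$H$-structure on $S^{2n+1}$ induces on $H_{\ast}(\Omega S^{2n+1};\bb{Q})\cong\bb{Q}[x]$ precisely the primitive comultiplication $x\mapsto x'+x''$. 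Both $\mathrm{pt}$ and $(S^{2n+1})_{\bb{Q}}$ lie in $\cal{H}'_{\bb{Q}}$ (rationalization carries a $1$-connected co-$H$-group of $CW$ type to one of the same type), so a cogroup isomorphism $H_{\ast}(\Omega X;\bb{Q})\cong H_{\ast}(\Omega Y;\bb{Q})$ with $Y=\mathrm{pt}$ or $S^{2n+1}$ is, by the fully faithfulness of $H_{\ast}(\Omega\,\cdot\,;\bb{Q})$ on $\cal{H}'_{\bb{Q}}$ (Theorem B(3)), induced by a co-$H$-equivalence $X\simeq Y_{\bb{Q}}$; this is exactly (iv).

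The remaining implications are then routine rational homotopy theory and close all the equivalences. For (iv)$\Rightarrow$(v): a co-$H$-equivalence is a homotopy equivalence and $X$ is already rational, so $X$ is contractible or homotopy equivalent to $(S^{2n+1})_{\bb{Q}}$, hence rationally contractible or rationally equivalent to $S^{2n+1}$. For (v)$\Rightarrow$(iii): in the first case $\Omega X\simeq\mathrm{pt}$; otherwise $X\simeq(S^{2n+1})_{\bb{Q}}$, so $\Omega X\simeq(\Omega S^{2n+1})_{\bb{Q}}\simeq K(\bb{Q},2n)$ — using that $\pi_{\ast}(S^{2n+1})\otimes\bb{Q}$ is concentrated in degree $2n+1$ — and an Eilenberg--MacLane space is a homotopy commutative $H$-group. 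Finally $(\ast)\Rightarrow$(i) and $(\ast)\Rightarrow$(ii): by $(\ast)\Rightarrow$(iv) the conditions (i) and (ii), being invariant under co-$H$-equivalence, may be checked for $X=\mathrm{pt}$ (trivial) or $X=(S^{2n+1})_{\bb{Q}}$, where $\nu$ is the degree $-1$ self-map, so $\Omega\nu$ is the negation endomorphism of the $H$-group $\Omega(S^{2n+1})_{\bb{Q}}\simeq K(\bb{Q},2n)$, which is a homotopy inverse $\chi$; thus $\Omega\nu\simeq\chi$ (giving (i)), and this $\chi$, being negation on the homotopy commutative space $K(\bb{Q},2n)$, is an $H$-map (giving (ii)). Together with the first paragraph, (i), (ii), (iii), (iv), (v) are all equivalent.

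The main obstacle is the bookkeeping hidden in $(\ast)\Rightarrow$(iv): one must genuinely match $H_{\ast}(\Omega S^{2n+1};\bb{Q})$ and $H_{\ast}(\Omega\,\mathrm{pt};\bb{Q})$ with the model cogroups $A(2n)$ and $A(0)$ as objects of $\cal{A}_{CG}$ (not merely as algebras or coalgebras), verify that $(S^{2n+1})_{\bb{Q}}$ and $\mathrm{pt}$ are objects of $\cal{H}'_{\bb{Q}}$, and then invoke the fully faithfulness in Theorem B(3) to promote the abstract cogroup isomorphism to a co-$H$-equivalence; once Theorems \ref{nuequaltochi} and 1.4 are in hand, everything else is standard.
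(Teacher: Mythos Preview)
Your proof is correct and uses the same core inputs as the paper (Theorems \ref{nuequaltochi}, 1.4, and B, plus the Eilenberg--MacLane structure of rational loop spaces), but the logical organization is genuinely different. The paper argues by showing that each topological condition (i)--(iv) is equivalent to its algebraic shadow (i)$'$--(iv)$'$ on $A=H_\ast(\Omega X;\bb{Q})$, and for the hard direction (k)$'\Rightarrow$(k) it invokes Scheerer's general result that for \emph{every} $X\in\cal{H}'_{\bb{Q}}$ the loop space $\Omega X$ is a weak product of rational Eilenberg--MacLane complexes, so that self-maps and multiplications are detected on homology. You instead route everything through the single hub $(\ast)$ and establish $(\ast)\Rightarrow$(iv) first; once $X$ is known to be co-$H$-equivalent to $(S^{2n+1})_{\bb{Q}}$ you only need the elementary special case $\Omega(S^{2n+1})_{\bb{Q}}\simeq K(\bb{Q},2n)$, and you recover (i), (ii), (iii) by direct inspection there. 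This buys you a slightly more self-contained argument that avoids the full strength of Scheerer's decomposition, at the cost of having to check explicitly that (i) and (ii) are invariant under co-$H$-equivalence (which is routine: a co-$H$-equivalence $f$ makes $\Omega f$ an $H$-equivalence intertwining both $\Omega\nu$ and $\chi$ up to homotopy, but you might want to state this rather than leave it implicit). The paper's approach, on the other hand, gives the cleaner conceptual statement that each topological condition is \emph{detected} by rational homology, not merely implied by the others.
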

We give proofs of the results in Section 2.
\section{Proofs of main results.}
We begin by recalling a result of Milnor-Moore [8]. The following lemma is Proposition 8.2 in [8]; we need not only the statement but also the proof. (Another reason why we record even its proof is that the inductive formula for the convolution-inverse of $f$ in [8] is incorrect; it is the formula for the antipode.)

\begin{lem}\label{gcbgroup}
Let $C=(C,\Delta,\epsilon)$ and $B=(B,\mu,\eta)$ be objects of $\cal{C}$ and $\cal{A}$ respectively. Then $G(C,B)$ is a group under the convolution product with identity $C \xrightarrow[]{\ \epsilon\ } R \xrightarrow[]{\ \eta\ } B$.
\begin{proof}
Recall that for $f,g\in G(C,B)$, the convolution product $f\ast g$ is defined to be the composite
$$
C\xrightarrow[]{\ \Delta\ } C\otimes C \xrightarrow[]{\ f\otimes g\ } B\otimes B \xrightarrow[]{\ \mu\ } B.
$$
Then it is clear that $G(C,B)$ is a monoid with identity $\eta\epsilon$.
\par\indent For $f\in G(C,B)$, we construct a right inverse $g$ by induction. Set $g_0=1_R$ and suppose that $g$ is defined on $C_{<n}:=\underset{i<n}\bigoplus\ C_i$. For $x\in C_n$, write $\Delta x=x\otimes 1 +\Sigma\ y_i\otimes z_i +1\otimes x$ with $0<\deg\ y_i, \deg\ z_i<n$. Then, by solving $(\mu\circ(f\otimes g)\circ\Delta)x=0$, we obtain
$$
g(x)=-f(x)-\Sigma\ f(y_i)g(z_i).
$$
Similarly we can construct a left inverse $h$ of $f$. The usual argument implies that $g$ coincides with $h$, and hence that $g$ is an inverse of $f$.
\end{proof}
\end{lem}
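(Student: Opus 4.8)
The plan is to prove the statement in two stages: first that $G(C,B)$ is a monoid, and then that every element of this monoid has a two-sided inverse. For the monoid part I would observe that convolution on the full Hom-group $\Hom(C,B)$ is associative because $\Delta$ is coassociative and $\mu$ is associative, and that $\eta\epsilon$ is a two-sided unit for it by the counit axioms for $(C,\Delta,\epsilon)$ together with the unit axiom $\mu\circ(1_B\otimes\eta)=1_B=\mu\circ(\eta\otimes 1_B)$. Since $(\eta\epsilon)_0=1_R$ and $(f\ast g)_0=1_R$ whenever $f_0=g_0=1_R$, the subset $G(C,B)$ is closed under $\ast$ and contains the unit, hence is a submonoid.

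For the inverses, the crucial input is the connectedness of $C$: for $x\in C_n$ with $n>0$ one has $\Delta x = x\otimes 1+\sum y_i\otimes z_i+1\otimes x$ with $0<\deg y_i,\deg z_i<n$. Given $f\in G(C,B)$, I would build a right inverse $g$ by induction on degree: put $g_0=1_R$, assume $g$ defined on $C_{<n}$, and for $x\in C_n$ solve the degree-$n$ component of $f\ast g=\eta\epsilon$. Expanding $\mu\circ(f\otimes g)\circ\Delta$ on $x$ and using $f(1)=g(1)=1$ turns this into $f(x)+\sum f(y_i)g(z_i)+g(x)=0$, which forces
$$g(x)=-f(x)-\sum f(y_i)g(z_i);$$
the right-hand side is already defined since $\deg z_i<n$, and $g_0=1_R$, so $g\in G(C,B)$ and $f\ast g=\eta\epsilon$. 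A symmetric induction, solving $h\ast f=\eta\epsilon$ instead, produces a left inverse $h$ with $h(x)=-f(x)-\sum h(y_i)f(z_i)$. The standard monoid argument $h=h\ast(\eta\epsilon)=h\ast(f\ast g)=(h\ast f)\ast g=(\eta\epsilon)\ast g=g$ then shows $g=h$ is a two-sided inverse of $f$, whence $G(C,B)$ is a group.

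This is essentially the classical "a connected bialgebra has an antipode" computation, transplanted from $\Hom(H,H)$ to $\Hom(C,B)$, so I do not anticipate a serious obstacle; the real care is bookkeeping. The one point to watch is the side on which $f$ is being inverted: the right-inverse recursion applies $f$ to the left tensor factors $y_i$ and the inductively known $g$ to the right factors $z_i$, rather than the antipode-style mixing $-x-\sum\chi(y_i)z_i$, and this is precisely where the formula recorded in [8] is in error. The remaining minor points are the canonical identifications $C_0\cong R\cong B_0$ and $B\otimes R\cong B\cong R\otimes B$ needed to make the degree-$0$ normalization and the expansions above literally correct.
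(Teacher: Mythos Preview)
Your proposal is correct and follows exactly the same approach as the paper: establish the monoid structure from (co)associativity and (co)unit axioms, build a right inverse $g$ by the degree-by-degree recursion $g(x)=-f(x)-\sum f(y_i)g(z_i)$ using connectedness of $C$, do the symmetric construction for a left inverse $h$, and conclude $g=h$ by the standard monoid argument. You have simply spelled out more of the routine verifications (closure of $G(C,B)$, the explicit $h=h\ast(f\ast g)=g$ chain) than the paper does, and you correctly flag the side on which $f$ appears in the recursion as the point where the formula in [8] goes wrong.
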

\begin{proof}[{\it{Proof of Theorem A(2).}}]
Consider the natural bijection $\cal{A}(A,B)\longrightarrow G(SA,B)$ induced by the inclusion $SA\hooklong\hspace{-2mm}\longrightarrow A$ (cf. Theorem A(1) and the construction in [6]). Then formula (2.6$'$) in [6] implies that this bijection preserves multiplication.
\end{proof}
\begin{rem}\label{remof2.2comonoidA}
Let $A$ be a comonoid in $\cal{A}$. Then the existence of an inverse $\nu$ of $A$ and that of an antipode $\chi$ of the underlying bialgebra of $A$ follow from Lemma \ref{gcbgroup}. The antipode $\chi$ is the convolution-inverse of $1_A\in G(A,A)$, and the inverse $\nu$ is the homomorphism of algebras corresponding to the convolution-inverse of the canonical inclusion $i\in G(SA,A)$ via the natural isomorphism of Theorem A(2). 
\end{rem}
\begin{proof}[Proof of Theorem \ref{underHopf.alge}]
Since $SA$ is a subcoalgebra of the underlying coalgebra of $A$ ([6, Lemma 2.5]), the canonical inclusion $i:SA\hooklong\hspace{-3.5mm}\longrightarrow  A$ induces a monoid homomorphism $i^{\sharp}: G(A,A)\hooklong\hspace{-3.5mm}\longrightarrow G(SA,A)$. Since $ \chi $ is the convolution-inverse of $1_A$, $i^{\sharp} \chi = \chi|_{SA}$ is the convolution-inverse of $i$,and $ \nu $ is just the extension of $\chi|_{SA}:SA\longrightarrow A$ as a graded algebra homomorphism (cf. Theorem A(1) and Remark 2.2).
\end{proof}
For the proof of Theorem \ref{nuequaltochi}, we prove the following lemma.
\begin{lem}\label{con.gra.Hopf}
Let $H$ be a connected graded Hopf algebra over $R$. Then the antipode $\chi$ of $H$ is surjective.
\begin{proof}
We proceed by induction on the degree.
\par\indent It is obvious that $\chi$ is bijective on $H_0$. Suppose that $\chi$ is surjective in ${\rm{degree}}<n$. Since $\chi$ is an antihomomorphism of graded algebras ([8, 8.7 Proposition]), $\chi$ restricts to an endomorpism of decomposable elements in $H_n$. (Decomposable elements in $H_n$ are sums of products of elements of ${\rm{degree}}<n$.) Thus $\chi$ induces the endomorphism $\overline{\chi_n}$ on $Q_nH:= H_n/\{$decomposable elements in $H_n\}$. Since the endomorphism of decomposable elements in $H_n$ is surjective by induction hypothesis, it is enough to show that the endomorphism $\overline{\chi_n}$ on $Q_nH$ is surjective.
 For $x\in H_n$, write $\Delta x= x\otimes 1+\Sigma y_i\otimes z_i +1\otimes x$ with $0<\deg\ y_i,\deg\ z_i<n$, and recall the inductive formula for the convolution-inverse in the proof of Lemma \ref{gcbgroup}. Then the inductive formula applied to the case of $f=1_H$ gives
$$\chi (x)=-x-\Sigma y_i \chi (z_i),$$
which implies that $\overline{\chi_n}$ is the multiplication by $-1$, and hence that $\overline{\chi_n}$ is surjective.
\end{proof}
\end{lem}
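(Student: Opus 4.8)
The plan is to induct on the degree, splitting each graded piece $H_n$ into its decomposable and indecomposable parts and treating the two separately. For the base case, since $H$ is connected and $\chi$ is the convolution-inverse of $1_H$, the antipode is forced to be the identity on $H_0 = R$, hence bijective there. Assume inductively that $\chi$ is surjective in every degree $< n$.

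First I would dispose of the decomposable elements in $H_n$, namely the submodule $D_n$ spanned by products $ab$ with $0 < \deg a, \deg b < n$. Because $\chi$ is an antihomomorphism of graded algebras ([8, 8.7 Proposition]), it sends such a product to $\pm\,\chi(b)\chi(a)$, again a decomposable element; thus $\chi$ maps $D_n$ into itself. To see the restriction is onto, take a generator $ab$ and use the inductive hypothesis to pick $a',b'$ with $\chi(a')=a$ and $\chi(b')=b$; then $\chi(\pm b'a') = \pm\chi(a')\chi(b') = ab$, so $ab$ lies in the image. Hence $\chi$ restricts to a surjection of $D_n$ onto itself.

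It then remains to handle the induced endomorphism $\overline{\chi_n}$ of the indecomposable quotient $Q_n H = H_n / D_n$, and this is the step carrying the actual content. The key tool is the inductive formula for the convolution-inverse recorded in the proof of Lemma \ref{gcbgroup} --- and here one must use \emph{that} formula rather than the incorrect Milnor--Moore version, a point the paper has already flagged. Writing $\Delta x = x\otimes 1 + \Sigma\, y_i\otimes z_i + 1\otimes x$ with $0 < \deg y_i, \deg z_i < n$ and applying the formula to $f = 1_H$ gives $\chi(x) = -x - \Sigma\, y_i\,\chi(z_i)$. The correction term is a sum of products of positive-degree elements, so it lies in $D_n$; hence $\overline{\chi_n}$ is multiplication by $-1$, an isomorphism, and in particular surjective.

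Finally I would glue the pieces together via the short exact sequence $0 \to D_n \to H_n \to Q_n H \to 0$, on which $\chi_n$ acts as an endomorphism by the above. Since $\chi_n$ is surjective on the subobject $D_n$ and induces a surjection on the quotient $Q_n H$, right-exactness of cokernels (equivalently, the snake lemma) forces $\chi_n$ to be surjective on $H_n$, which closes the induction. The only points requiring care are keeping track of the antihomomorphism signs and confirming that the correction term genuinely lands in $D_n$; both are routine, so the crux is really the clean computation that $\chi$ reduces to $-1$ on indecomposables.
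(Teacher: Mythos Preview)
Your proof is correct and follows essentially the same route as the paper's: induction on degree, surjectivity on decomposables via the antihomomorphism property and the inductive hypothesis, and the computation that $\chi$ acts as $-1$ on indecomposables using the convolution-inverse formula from Lemma~\ref{gcbgroup}. Your version spells out a few details the paper leaves implicit (the explicit preimage argument on $D_n$ and the short-exact-sequence gluing), but the structure and key ideas are identical.
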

\begin{proof}[\it{Proof of Theorem \ref{nuequaltochi}}]
(i) $\Rightarrow$ (ii) Obvious.\\
(ii) $\Rightarrow$ (iii) Let $a$ and $b$ be elements of $A_p$ and $A_q$ respectively. By Lemma \ref{con.gra.Hopf}, we can choose elements $\tilde{a}$ and $\tilde{b}$ with $\chi(\tilde{a})=a$ and $\chi(\tilde{b})=b$. Note that $\chi$ is both an antihomomorphism and a homomorphism of graded algebras by [8, 8.7 Proposition] and the assumption. Then we have
$$ab=(-1)^{pq}\chi(\tilde{b}\tilde{a})=(-1)^{pq}ba.$$
(iii) $\Rightarrow$ (i) Since $A$ is commutative as a graded algebra, $\chi$ is a homomorphism of graded algebras. Thus Theorem A(1) and Theorem \ref{underHopf.alge} imply that $\nu=\chi$.
\if\else
\item[(2)]
\begin{itemize}
\item[(a)] It is enough to show that (iii) is equivalent to (iv).\\
(iii) $\Leftarrow$ (iv) By Lemma 2.5(1). \\
(iii) $\Rightarrow$ (iv) Since the underlying graded algebra of $A$ is isomorphic to $T(\overline{SA})$ (Theorem A(1)), $\overline{SA}$ has at most one generator of degree $n>0$. Thus the implication follows from Lemma 2.5(1).
\item[(b)] We prove the equivalence of (iii) and (v). By Lemma 2.5(2), we may assume that $R$ is local.\\
(iii) $\Leftarrow$ (v) By Lemma 2.5(1). \\
(iii) $\Rightarrow$ (v) By the definition of the tensor algebra, the graded commutativity of $A\cong T_{CM}(SA)\cong T(\overline{SA})$ implies the graded commutativity of $A\underset{R}{\otimes} \kappa(\mfrac{m})\cong T_{\kappa(\mfrac{m})}(\overline{SA}\underset{R}{\otimes}\kappa(\mfrac{m}))$, where $\kappa(\mfrac{m})$ denotes the residue field of the local ring $R=(R,\mfrac{m})$. Thus $\overline{SA}$ is zero or singly generated by part a and Nakayama's lemma ([Matsumura, pp, 8-9]). Hence $SA$ is isomorphic to $R$ or $C(R/\mfrac{a},n)$ for some $\mfrac{a}\subset \mfrac{m}$ and some $n$, where $n$ is even if $chR/\mfrac{a}\neq 2$.
\item[(c)] If $R$ is a field, the result follows from part (a) and Lemma 2.4(3). For the case where $A$ is of finite type, note that the localization $\chi_{\mfrac{m}}$ is an antipode of $A_{\mfrac{m}}$. Then the result follows from part (b), Lemma 2.4(3) and [A-M, Propositions 3.3 and 3.8].
\end{itemize}
\end{itemize}
\fi
\end{proof}
For the proof of Theorem 1.4, we prove the following lemmas. The categorical coproduct in $\cal{A}$ is denoted by $\ast$.
\begin{lem}
For a positively graded module $N$, let $C(N)$ denote the graded module $R[0]\oplus N$ equipped with the coproduct defined by
$$
\Delta(1)=1 \otimes 1\ for\ 1\in R=C(N)_0,
$$
$$
\Delta(x)=x \otimes 1 + 1 \otimes x\ for\ x\in N=\overline{C(N)}.
$$
\begin{itemize}
\item[(1)] $C(N)$ is a graded coalgebra. If $N=M[n]$, this structure is a unique graded coalgebra structure on $R[0]\oplus N$.
\item[(2)] Let $B$ be a connected graded algebra. Then the group $G(C(N),B)$ is naturally isomorphic to the module $\Hom_R(N,B)$ of homomorphisms of graded $R$-modules.
\item[(3)] $T_{CM}C(N)$ is a cogroup in $\cal{A}$ whose underlying graded algebra is the tensor algebra $T(N)$. The comultiplication $\Phi: T(N)\longrightarrow T(N)\ast T(N)$ is the homomorphism of graded algebras determined by $\Phi(x)=x'+x''$ for $x\in N$, where $x'$ and $x''$ are the images of $x$ under the canonical inclusions to the first and second factors of $T(N)\ast T(N)$ respectively. The inverse $\nu: T(N)\longrightarrow T(N)$ is the homomorphism of graded algebras determined by $\nu(x)=-x$ for $x\in N$.
\end{itemize}
\begin{proof} 
\begin{itemize}
\item[(1)] It is easily seen that $C(N)$ satisfies the coassociativity and counit axioms. The assertion for $N=M[n]$ is obvious.
\item[(2)] By the definition, there is a natural bijection from $G(C(N),B)$ to $\Hom_R(N,B)$. It is easily verified that it preserves multiplication by the definition of the convolution product.
\item[(3)] By Theorem A(1), $T_{CM}C(N)=T(N)$. The formula on $\Phi$ follows from that on $\Delta$ and formula $(2,6')$ in [6]. The formula on $\nu$ follows from that on $\Phi$.
\end{itemize}
\vspace{-4mm}
\end{proof}
\end{lem}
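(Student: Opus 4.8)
The plan is to dispatch parts (1)--(3) of the lemma in order, leaning throughout on Theorem A and on the convolution description of $G(-,-)$ from Lemma \ref{gcbgroup}. For part (1), since $C(N)=R[0]\oplus N$ is generated as a graded $R$-module by $1$ and the elements of $N$, it suffices to verify coassociativity and the counit axiom on $1$ and on an arbitrary $x\in N$: on $1$ both iterated coproducts return $1\otimes 1\otimes 1$, on $x$ both return $x\otimes 1\otimes 1+1\otimes x\otimes 1+1\otimes 1\otimes x$, and the counit identities are immediate since $\epsilon$ is the projection of $C(N)$ onto $C(N)_0=R$. For the uniqueness claim when $N=M[n]$ the point is a degree count: $C(N)$ is concentrated in degrees $0$ and $n$, so $(C(N)\otimes C(N))_n=R\otimes M\oplus M\otimes R$ has no ``intermediate'' summand, whence for $x\in M$ any degree-preserving coproduct has the form $\Delta(x)=b(x)\otimes 1+1\otimes a(x)$ with $a,b$ endomorphisms of $M$; the counit axiom (using $\epsilon_0=1_R$ and $\epsilon|_M=0$) then forces $a=b=1_M$. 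For general $N$ no such uniqueness holds, which is why the statement is restricted to $M[n]$.

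For part (2), a graded $R$-module map $f\colon C(N)\to B$ with $f_0=1_R$ is the same datum as its restriction $f|_N\in\Hom_R(N,B)$, because $C(N)_0=R$, the remainder of $C(N)$ is $N$, and $f|_N$ lands automatically in $\overline{B}$ by degree; so $f\mapsto f|_N$ is a bijection $G(C(N),B)\to\Hom_R(N,B)$, plainly natural in $N$ and in $B$. To see it respects the group laws I would compute the convolution product on $x\in N$: from $\Delta x=x\otimes 1+1\otimes x$ one gets $(f\ast g)(x)=\mu\bigl(f(x)\otimes g(1)+f(1)\otimes g(x)\bigr)=f(x)+g(x)$ in $B$, so $(f\ast g)|_N=f|_N+g|_N$ and the bijection is an isomorphism of groups onto $(\Hom_R(N,B),+)$.

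For part (3), $T_{CM}C(N)$ lies in $\cal{A}_{CM}$ by construction, hence is a cogroup by Theorem A(3), and its underlying graded algebra is the tensor algebra on $\overline{C(N)}=N$ by Theorem A(1). To identify $\Phi$ and $\nu$ I would combine Theorem A(1),(2) with part (2): the group-valued functor $\cal{A}(T(N),-)$ is naturally isomorphic to $G(C(N),-)\cong\Hom_R(N,-)$, an algebra map $\phi\colon T(N)\to B$ corresponding to $\phi|_N$, and the cogroup operation on $\cal{A}(T(N),B)$ corresponding to addition in $\Hom_R(N,B)$. Then $\Phi$ is the product $\iota_1\cdot\iota_2$ in the group $\cal{A}\bigl(T(N),T(N)\ast T(N)\bigr)$, where $\iota_1,\iota_2$ are the two coproduct inclusions --- indeed the product of $\phi,\psi\colon A\to B$ is $[\phi,\psi]\circ\Phi$ with $[\phi,\psi]\colon A\ast A\to B$ the map restricting to $\phi$ and $\psi$, and $[\iota_1,\iota_2]=1_{T(N)\ast T(N)}$; since $\iota_1|_N(x)=x'$ and $\iota_2|_N(x)=x''$, their sum is $x\mapsto x'+x''$, giving $\Phi(x)=x'+x''$. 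Finally $\nu$ is characterized by $[1_{T(N)},\nu]\circ\Phi=\eta\epsilon$, the trivial endomorphism $T(N)\to R\to T(N)$; evaluating on $x\in N$ yields $x+\nu(x)=0$, so $\nu(x)=-x$. (Alternatively both formulas follow from formula $(2.6')$ of Berstein~[6] fed the formula for $\Delta$, together with Remark \ref{remof2.2comonoidA}.)

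The routine checks in (1) and (2) present no difficulty; the only place demanding care is the identification of $\Phi$ and $\nu$ in (3) --- keeping straight that the convolution/cogroup operation corresponds to \emph{addition} on $\Hom_R(N,-)$ rather than composition, and transporting the generic element correctly through $\Phi\leftrightarrow\iota_1\cdot\iota_2$ and $\nu\leftrightarrow 1^{-1}$. This is pure bookkeeping once Theorem A(2) and Remark \ref{remof2.2comonoidA} are in hand, so I anticipate no real obstacle.
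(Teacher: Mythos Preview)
Your proposal is correct and follows essentially the same approach as the paper's proof, simply with the details spelled out where the paper is terse (``easily seen'', ``obvious'', ``easily verified''). In part~(3) your Yoneda-style computation of $\Phi$ as $\iota_1\cdot\iota_2$ and of $\nu$ via $[1,\nu]\circ\Phi=\eta\epsilon$ is precisely an unpacking of what the paper's appeal to Berstein's formula~$(2.6')$ amounts to, as you yourself note in your parenthetical alternative.
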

\if\else
\begin{cor}
$T_{CM}C(R,n)$ is a cogroup in $\cal{A}$ whose underlying graded algebra is the polynomial algebra $R[X]$ on a generator $X$ of degree $n$. The comultiplication $\Phi:R[X]\longrightarrow R[X]\ast R[X]$ is the homomorphism of graded algebras determined by $\Phi(X)=X'+X''$, and the inverse $\nu:R[X]\longrightarrow R[X]$ is the homomorphism of graded algebras determined by $\nu(x)=-X$.
\end{cor}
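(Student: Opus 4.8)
The plan is to obtain this corollary as the special case $N=R[n]$ (equivalently $M=R$) of Lemma 2.4, so that essentially nothing is left to do beyond one bookkeeping identification. First I would recall that the tensor algebra $T(R[n])$ on a single generator $X$ in degree $n$ is canonically isomorphic, as a graded algebra, to the polynomial algebra $R[X]$ on a generator $X$ of degree $n$: both are free graded $R$-modules on the monomials $1,X,X^2,\ldots$ with $X^k$ in degree $kn$, and in both the product is determined by $X^i\cdot X^j=X^{i+j}$. No sign subtlety intervenes here, since the tensor algebra imposes no graded-commutativity relation; the isomorphism is simply the one carrying the degree-$n$ tensor $X$ to the polynomial generator, and it is an isomorphism of graded algebras.

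With this identification in hand, I would apply Lemma 2.4(3) to $N=R[n]$. It yields directly that $T_{CM}C(R,n)=T_{CM}C(R[n])$ is a cogroup in $\cal{A}$ whose underlying graded algebra is $T(R[n])\cong R[X]$, and that the comultiplication $\Phi$ and the inverse $\nu$ are the homomorphisms of graded algebras determined on the generating submodule $\overline{C(R,n)}=N=R[n]$ by $\Phi(x)=x'+x''$ and $\nu(x)=-x$. Since $R[n]$ is spanned over $R$ by $X$, these conditions determine $\Phi$ and $\nu$ completely, and transported along the isomorphism $T(R[n])\cong R[X]$ they read $\Phi(X)=X'+X''$ and $\nu(X)=-X$, where $X'$ and $X''$ are the images of $X$ under the two canonical inclusions $R[X]\hookrightarrow R[X]\ast R[X]$. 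This is the asserted description.

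The only obstacle, and it is a minor one, is recording the isomorphism $T(R[n])\cong R[X]$ cleanly and observing that it intertwines the two cogroup structures (which is automatic, since $\Phi$ and $\nu$ are algebra maps and hence transport along any algebra isomorphism); everything else is a verbatim specialization of Lemma 2.4.
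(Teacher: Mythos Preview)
Your proposal is correct and matches the paper's own treatment: the corollary is stated immediately after Lemma~2.4 with no separate proof, and in Section~1 the paper simply says that ``$A(n)$ give[s] explicit descriptions of $T_{CM}C(R,n)$ (cf.\ Lemma~2.4)'', i.e., exactly the specialization $N=R[n]$ of Lemma~2.4(3) together with the identification $T(R[n])\cong R[X]$ that you spell out. Nothing more is needed.
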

\fi
\begin{lem}
Let $N$ be a graded $R$-module.
\begin{itemize}
\item[(1)] Suppose that $N=R/\mfrac{a}[n]$ for some $n>0$ and some $\mfrac{a}\subsetneqq R$. Then the tensor algebra $T(N)$ is commutative as a graded algebra if and only if $n$ is even or $ch(R/\mfrac{a})=2$.
\item[(2)] $T(N)=T_R(N)$ is commutative as a graded algebra if and only if $T_{R_\mfrac{m}}(N_\mfrac{m})$ is commutative as a graded algebra for any maximal ideal $\mfrac{m}$ of $R$.
\end{itemize}
\begin{proof}
\begin{itemize}
\item[(1)] Let $x$ be a generator of $N_n$ and write $N_n=R/\mfrac{a}\cdot x$. Then we have
\begin{eqnarray}
T(N) &=& R+R/\mfrac{a}\cdot x + R/\mfrac{a}\cdot x\underset{R}{\otimes} R/\mfrac{a}\cdot x +\cdots \nonumber \\
     &=& R+R/\mfrac{a}\cdot x + R/\mfrac{a}\cdot x^2 +\cdots \nonumber,
\end{eqnarray}
which shows that $T(N)$ is commutative as an ungraded algebra. Thus the graded commutativity of $T(N)$ is equivalent to the condition that if $n$ is odd, then $x\cdot x=-x\cdot x$. The above expansion of $T(N)$ shows that $2\cdot x^2=0$ if and only if $chR/\mfrac{a}=2$, which completes the proof of part 1.
\item[(2)] ($\Rightarrow$) Note that localizations commute with tensor products and direct sums. Then the implication is obvious from the definition of the tensor algebra.\\
($\Leftarrow$) The implication follows from [4, Propositions 3.3 and 3.8].
\end{itemize}
\vspace{-4mm}
\end{proof}
\end{lem}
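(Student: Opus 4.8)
The plan is to treat the two parts separately: part (1) by an explicit computation of the tensor algebra $T(N)$, and part (2) by a routine local--global argument.

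For part (1), since $N=R/\mfrac{a}[n]$ is concentrated in degree $n$, I would first write $N_n=R/\mfrac{a}\cdot x$ and use the identity $R/\mfrac{a}\otimes_R R/\mfrac{a}\cong R/\mfrac{a}$ to see that $N^{\otimes k}$ is concentrated in degree $kn$ and is the rank-one $R/\mfrac{a}$-module $R/\mfrac{a}\cdot x^{k}$ (with $x^{k}$ corresponding to $1$). Hence $T(N)=R\oplus R/\mfrac{a}\cdot x\oplus R/\mfrac{a}\cdot x^{2}\oplus\cdots$, which is visibly commutative as an ungraded algebra because $R$ is commutative and $x^{i}x^{j}=x^{i+j}=x^{j}x^{i}$. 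Therefore the only possible obstruction to graded commutativity is the sign $(-1)^{pq}$ in products of homogeneous elements: $T(N)$ is graded commutative iff $x^{i}x^{j}=(-1)^{ijn^{2}}x^{i+j}$ for all $i,j\ge 1$, i.e. iff $\bigl(1-(-1)^{ijn^{2}}\bigr)=0$ in $R/\mfrac{a}$ for all $i,j$. If $n$ is even this holds automatically; if $n$ is odd, taking $i=j=1$ forces $2=0$ in $R/\mfrac{a}$, and since $\mfrac{a}\subsetneqq R$ the ring $R/\mfrac{a}$ is nonzero, so this says exactly $ch(R/\mfrac{a})=2$; conversely, under that hypothesis every coefficient $1-(-1)^{ijn^{2}}\in\{0,2\}$ vanishes. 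This gives the claimed equivalence.

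For part (2), the key observation is that localization at a maximal ideal $\mfrac{m}$ commutes with direct sums and with tensor products, so there is a natural isomorphism of graded algebras $T_R(N)_{\mfrac{m}}\cong T_{R_{\mfrac{m}}}(N_{\mfrac{m}})$. I would let $L\subseteq T_R(N)$ be the $R$-submodule spanned by all graded commutators $ab-(-1)^{pq}ba$ with $a\in T(N)_p$, $b\in T(N)_q$; then $T_R(N)$ is graded commutative precisely when $L=0$, and under the isomorphism above $L_{\mfrac{m}}$ is the corresponding submodule of $T_{R_{\mfrac{m}}}(N_{\mfrac{m}})$. Since an $R$-module is zero iff all of its localizations at maximal ideals are zero, $L=0$ iff $L_{\mfrac{m}}=0$ for every $\mfrac{m}$, which is exactly the asserted equivalence; the two inputs are the standard localization-of-tensor-products isomorphism and the local--global vanishing criterion (e.g. [4, Propositions 3.3 and 3.8]).

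I do not anticipate a real obstacle here. Part (1) reduces to inspecting an explicitly described algebra, and part (2) is a standard local--global argument. The only points requiring slight care are isolating which sign relations actually survive once one knows $T(N)$ is commutative as an ungraded algebra, and checking that ``$2=0$ in $R/\mfrac{a}$'' is equivalent to $ch(R/\mfrac{a})=2$ rather than merely to the characteristic dividing $2$ --- this is where the hypothesis $\mfrac{a}\subsetneqq R$, i.e. $R/\mfrac{a}\ne 0$, is used.
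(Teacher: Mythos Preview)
Your proposal is correct and follows essentially the same approach as the paper: in part~(1) you compute $T(N)=R\oplus R/\mfrac{a}\cdot x\oplus R/\mfrac{a}\cdot x^{2}\oplus\cdots$, observe ungraded commutativity, and reduce graded commutativity to the vanishing of $2$ in $R/\mfrac{a}$ when $n$ is odd; in part~(2) you use $T_R(N)_{\mfrac{m}}\cong T_{R_{\mfrac{m}}}(N_{\mfrac{m}})$ together with the local--global vanishing criterion, which is exactly what the paper does (citing the same Atiyah--MacDonald propositions). Your write-up is merely a bit more explicit---introducing the commutator submodule $L$ and noting the role of $\mfrac{a}\subsetneqq R$---but the argument is the same.
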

\begin{lem}
Let $N$ be a positively graded $R$-module.
\begin{itemize}
\item[(1)] Suppose that $R$ is a field. Then the tensor algebra $T(N)$ is graded commutative if and only if $N$ is in $\cal{F}$.
\item[(2)] Suppose that $N$ is of finite type. Then tensor algebra $T(N)$ is graded commutative if and only if $N$ is in $\cal{S}^{ft}$.
\end{itemize}
\begin{proof}
\begin{itemize}
\item[(1)] ($\Leftarrow$) By Lemma 2.5(1).\\
($\Rightarrow$) We can easily see that $N$ is zero or singly geenerated. Thus the implication follows from Lemma 2.5(1).
\item[(2)] By Lemma 2.5(2), we may assume that $R$ is local.\\
($\Leftarrow$) By Lemma 2.5(1).\\
($\Rightarrow$) By the definition of the tensor algebra, the graded commutativity of $T(N)=T_R(N)$ implies the graded commutativity of $T_R(N)\underset{R}{\otimes}\kappa(\mfrac{m})=T_{\kappa(\mfrac{m})}(N\underset{R}{\otimes}\kappa(\mfrac{m}))$, where $\kappa(\mfrac{m})$ denotes the residue field of the local ring $R=(R,\mfrac{m})$. Thus $N$ is zero or singly generated by part 1 and Nakayama's lemma ([7, pp.8-9]). Hence $N$ is in $\cal{S}^{ft}$ by Lemma 2.5(1).
\end{itemize}
\vspace{-4mm}
\end{proof}
\end{lem}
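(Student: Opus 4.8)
The plan is to deduce both parts from Lemma 2.5, treating the field case directly and the finite-type case by localizing and passing to residue fields. For part (1), the ``if'' direction is immediate: an object of $\cal{F}$ is either $0$, in which case $T(N)=R$, or of the form $R[n]$ (which is $R/\mfrac{a}[n]$ with $\mfrac{a}=0$) with $n$ even or $chR=2$, so Lemma 2.5(1) applies. For the ``only if'' direction I would first show that $N$ is $0$ or singly generated. If $N$ had two $R$-linearly independent homogeneous elements $x,y$, then, since $N$ and $N\underset{R}{\otimes} N$ are direct summands of $T(N)$, the relation $xy=(-1)^{\deg x\,\deg y}yx$ holding in $T(N)$ would force $x\otimes y=\pm\,y\otimes x$ in $N\underset{R}{\otimes} N$, contradicting the linear independence of $x\otimes y$ and $y\otimes x$. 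Hence $N=0$ or $N\cong R[n]$ for a unique $n>0$, and Lemma 2.5(1) (with $\mfrac{a}=0$) then forces $n$ even or $chR=2$, that is, $N\in\cal{F}$ (recall that $\cal{F}=\cal{S}$ when $R$ is a field).

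For part (2), I would use Lemma 2.5(2) to reduce to the local case: $T_R(N)$ is graded commutative iff $T_{R_{\mfrac{m}}}(N_{\mfrac{m}})$ is for every maximal ideal $\mfrac{m}$, while, since localization preserves finite type, $N\in\cal{S}^{ft}$ iff $N$ is of finite type and every $N_{\mfrac{m}}$ is $0$ or $R_{\mfrac{m}}/\mfrac{a}_{\mfrac{m}}[n]$ with the stated parity. So I may assume $R$ is local with maximal ideal $\mfrac{m}$ and residue field $\kappa(\mfrac{m})$, and must show that $T_R(N)$ is graded commutative iff $N=0$ or $N\cong R/\mfrac{a}[n]$ with $n$ even or $ch(R/\mfrac{a})=2$. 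The ``if'' part is Lemma 2.5(1). For ``only if'', the tensor algebra commutes with base change, so $T_R(N)\underset{R}{\otimes}\kappa(\mfrac{m})\cong T_{\kappa(\mfrac{m})}(N\underset{R}{\otimes}\kappa(\mfrac{m}))$; being a quotient of the graded commutative algebra $T_R(N)$, this is graded commutative, so by part (1) the graded $\kappa(\mfrac{m})$-vector space $N\underset{R}{\otimes}\kappa(\mfrac{m})$ is $0$ or one dimensional and concentrated in a single degree $n$. Applying Nakayama's lemma ([7]) in each degree separately then gives $N_i=0$ for $i\neq n$ and $N_n$ cyclic, so $N=0$ or $N\cong R/\mfrac{a}[n]$ with $\mfrac{a}\subset\mfrac{m}$; a last appeal to Lemma 2.5(1) pins down the parity, and $N\in\cal{S}^{ft}$ follows.

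The facts I would use without further comment are routine: that the tensor algebra commutes with base change, that a surjection of graded algebras preserves graded commutativity, and that $N$ and $N\underset{R}{\otimes} N$ are direct summands of $T(N)$. The one place requiring a little care is the ``only if'' part of (2): Nakayama must be invoked in each fixed degree rather than applied to $N$ as a whole, since being ``of finite type'' makes only the individual $N_i$ finitely generated over $R$, not $N$ itself. Granting that, the argument is precisely the reduction to a field carried out in part (1), and I do not anticipate a genuine obstacle.
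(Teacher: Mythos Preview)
Your proposal is correct and follows essentially the same route as the paper: both parts are deduced from Lemma~2.5, with part~(1) handled by showing $N$ is zero or singly generated and then invoking Lemma~2.5(1), and part~(2) by localizing via Lemma~2.5(2), base-changing to the residue field, applying part~(1), and finishing with Nakayama and Lemma~2.5(1). You simply spell out two steps the paper leaves implicit (the linear-independence argument for ``$N$ is zero or singly generated'' and the degree-wise application of Nakayama), but the architecture is identical.
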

\begin{proof}[Proof of Theorem 1.4]
\begin{itemize}
\item[(1)] Since $C:\cal{M} \longrightarrow\cal{C}$ and $T_{CM}:\cal{C}\longrightarrow\cal{A}_{CG}$ are fully faithful, so is $T_{CM}\circ C$. Thus the assertion follows from Lemma 2.5.
\item[(2)] Since $T_{CM}\circ C:\cal{M}\longrightarrow \cal{A}_{CG}$ is fully faithful, it is enough to show that for a cogroup $A$ in $\cal{A}$, the condition (iii) in Theorem 1.3 is equivalent to the following:
\vspace{1.5mm}\begin{itemize}
\item[(iv)] $A$ is isomorphic to $T_{CM}C(N)$ for some $N\in\cal{F}$.
\end{itemize}\vspace{1.5mm}
Since $A$ is isomorphic to $T(\overline{SA})$ in $\cal{A}$ (Theorem A(1)), (iii) is equivalent to the condition that $\overline{SA}$ is in $\cal{F}$ (Lemma 2.6(1)), which is equivalent to (iv) by Lemma 2.4(1).
\item[(3)] Note that a positively graded module $N$ is of finite type if and only if so is the tensor algebra $T(N)$. Thus we show that for a cogroup $A$ in $\cal{A}$ whose underlying graded module is of finite type, the condition (iii) in Theorem 1.3 is equivalent to the following:
\vspace{1.5mm}\begin{itemize}
\item[(v)] $A$ is isomorphic to $T_{CM}C(N)$ for some $N\in\cal{S}^{ft}$. 
\end{itemize} \vspace{1.5mm}
Since $A$ is isomorphic to $T(\overline{SA})$ in $\cal{A}$ (Theorem A(1)), (iii) is equivalent to the condition that $\overline{SA}$ is in $\cal{S}^{ft}$ (Lemma 2.6(2)), which is equivalent to (v) by Lemma 2.4(1) and an argument similar to that of the proof of Lemma 2.5(2). This equivalence and the full-faithfulness of $T_{CM}\circ C:\cal{M}\longrightarrow\cal{A}_{CG}$ complete the proof.
\end{itemize}
\vspace{-4mm}
\end{proof}
\begin{proof}[Proof of Corollary 1.5]
It is immediate from Theorem 1.4(2).
\end{proof}
\begin{rem}
Note that $A=T_{CM}C(N)$ ($N\in\cal{S}$) satisfies the equiation $\nu=\chi=1_A$ in the case where $chR=2$ (cf. Lemma 2.4.(3)).
\end{rem}

\begin{proof}[Proof of Example 1.6]
\begin{itemize}
\item[(1)] Since $M[n]$ is in $\cal{S}$, $A=T_{CM}C(M,n)$ is in $\cgcoa$ by Theorem 1.4(1).
\item[(2)] It is easily seen that for any maximal ideal $\mfrac{m}$, $N_{\mfrac{m}}=0$ or $N_{\mfrac{m}}=R_{\mfrac{m}}/\mfrac{a}_{\alpha\mfrac{m}}[n_{\alpha}]$ for some $\alpha$. Thus $A$ is commutative as a graded algebra by Theorem 1.4(1).
\item[(3)] Note that $B\in\cal{A}$ is graded commutative if and only if so is the nonunital graded algebra $\overline{B}=\underset{i>0}\bigoplus B_i$. Note also that $S^{-1}$ commutes with tensor products and direct sums. Then the result easily follows.
\end{itemize}
\vspace{-1.5mm}\end{proof}
\begin{proof}[Proof of Corollary 1.7]
\begin{itemize}
\item[(1)] Note that for $A\in\cal{A}_{CG}$, $A$ is in $\cal{A}_{coCG}$ if and only if $SA$ is isomorphic to $C(\overline{SA})$ ([6, Corollary 2.6]). Note also that for $A\in\cal{A}_{CG}$, $A$ is in $^{co}\cal{A}_{CG}$ if and only if $SA$ is cocommutative as a graded coalgebra ([6, p. 262]). Then we see that the functor $T_{CM}: \cal{C}\longrightarrow \cal{A}_{CG}$ restricts to the functors $T_{CM}: \ ^{co}\cal{C}\longrightarrow\ ^{co}\cal{A}_{CG}$ and $T_{CM}\circ C:\cal{M}\longrightarrow \cal{A}_{coCG}$. The latter one further restricts to $T_{CM}\circ C:\cal{S}\longrightarrow \cal{A}_{coCG}^{co}$ by Theorem 1.4(1). The left vertical functor is obtained in Theorem 1.4(3).
\item[(2)] It is obvious from the definitions.
\item[(3)] It is shown that the left vertical functor is an equivalence of categories in Theorem 1.4(3). The other three functors are equivalences of categories from Theorem A and Berstein's results mentioned in the proof of part 1.
\item[(4)] It is immediate from Theorem 1.4.
\end{itemize}
\vspace{-4mm}
\end{proof}
 Now we prove Theorem B, which is used to deduce Theorem 1.10 from Theorems 1.3 and 1.4.
 \begin{proof}[{\it{Proof of Theorem B}}]
 Part 1 and the assertion on $\nu$ in part 2 follow from [6, Corollary 3.2]. By the definition ([6, p. 261]), the underlying Hopf algebra of $H_{\ast}(\Omega X; R)$ is just the homology Hopf algebra of the $H$-group $\Omega X$ (cf. the proof of Corollary 3.2 of [6]). Hence the assertion on $\chi$ in part 2 follows. Part 3 follows from results in [9, pp. 68-69 and p. 75] and Theorem A(1).
 \end{proof}
\begin{proof}[\it{Proof of Theorem \ref{nusamequaltochi}.}]
Let the conditions (i)$'$, (ii)$'$ and (iii)$'$ denote the conditions (i), (ii) and (iii) in Theorem \ref{nuequaltochi} applied to $A=H_{\ast}(\Omega X;\bb{Q})$, and let the condition (iv)$'$ denote the condition (iv) in the proof of Theorem 1.4(2) applied to $A=H_{\ast}(\Omega X;\bb{Q})$. Then it is enough to show that the conditions (i)-(iv) are equivalent to the conditions (i)$'$-(iv)$'$ respectively, and that (iv) is equivalent to (v).
\par\indent We can easily see that (i)-(iii) imply (i)$'$-(iii)$'$ respectively from Theorem B. Recall that $\Omega X$ is homotopy equivalent to the (weak) product of rational Eilenberg-MacLane complexes ([9, pp. 69-70]). Then we also see the converses.
\par\indent For the equivalence (iv)$\Leftrightarrow$(iv)$'$, recall that $S(H_{\ast}(\Omega\Sigma Y;\bb{Q}))\cong H_{\ast}(Y;\bb{Q})$ (cf. the proof of [6, Corollary 1.4]). Then the implication (iv)$\Rightarrow$(iv)$'$ follows from Theorem A. The converse follows from Theorem B(3).
\par\indent
We end the proof by showing the equivalence (iv)$\Leftrightarrow$(v). The implication (iv)$\Rightarrow$(v) is obvious. For the converse, it is enough to see that if $X$ is rationally equivalent to $S^{2n+1}$, $X$ is co-$H$-equivalent to $\Sigma S_{(0)}^{2n}$. Since $H_{\ast}(\Omega X;\bb{Q})\cong H_{\ast}(\Omega \Sigma S^{2n};\bb{Q})\cong T\overline{H}_{\ast}(S^{2n};\bb{Q})$ holds in $\cal{A}$, the underlying algebras of $H_{\ast}(\Omega X;\bb{Q})$ and $H_{\ast}(\Omega \Sigma S^{2n};\bb{Q})$ are isomorphic to the polynomial ring $\bb{Q}[z]$ on a generator $z$ of degree $2n$. Thus Corollary 1.5 implies that both $H_{\ast}(\Omega X;\bb{Q})$ and $H_{\ast}(\Omega \Sigma S^{2n};\bb{Q})$ are isomorphic to $A(2n)$ in $\cal{A}_{CG}$, and hence that $X$ is co-$H$-equivalent to $\Sigma S_{(0)}^{2n}$, by Theorem B(3).
\end{proof} 
\subsection*{Acknowledgment}
My heartfelt appreciation goes to Prof. Arkowitz whose comments and suggestions were of inestimable value for my study. The main results in this paper resulted from an attempt to answer questions raised by him. I am also indebt to Prof. Bergman whose comments made enormous contribution to my work.

\end{document}